\providecommand{\U}[1]{\protect\rule{.1in}{.1in}}
\newtheorem{theorem}{Theorem}
\theoremstyle{plain}
\newtheorem{corollary}{Corollary}
\newtheorem{lemma}{Lemma}
\newtheorem{proposition}{Proposition}
\newtheorem{remark}{Remark}
\numberwithin{equation}{section}
\begin{document}
\title[Integrals of hypergeometric function]{A few identities and integrals involving Pochhammer symbols, Jacobi
polynomials, and the generalized hypergeometric function.}
\author{Pawe\l \ J. Szab\l owski}
\address{Department of Mathematics and Information Sciences, \\
Warsaw University of Technology\\
ul Koszykowa 75, 00-662 Warsaw, Poland}
\email{pawel.szablowski@gmail.com}
\date{March , 2025}
\subjclass[2000]{Primary 11B65, 33C05; Secondary 33C45}
\keywords{Pochhammer symbol, hypergeometric function, generalized hypergeometric
function, Beta distribution, Jacobi polynomials }

\begin{abstract}
We first present some identities involving the Pochhammer symbol (rising
factorial). We also recall and present some new properties of the Jacobi
polynomials. We use them to expand a general hypergeometric function in an
orthogonal series of Jacobi polynomials. Then we use these expansions to
discover closed forms for certain integrals of Jacobi polynomials that are
multiplied by a generalized hypergeometric function and a Beta density. We can
also obtain closed forms for some series involving rising factorials that
generalise binomial series by using well-known properties of the
hypergeometric function. In particular, we get a few new, nontrivial
identities involving the Pochhammer symbol. We can also derive some
simplifying identities for generalized hypergeometric functions.

\end{abstract}
\maketitle

\section{Introduction and Notation}

As stated in the abstract, the paper will expand a generalized hypergeometric
function in an orthogonal series of Jacobi polynomials and find some related
Beta integrals of the generalized hypergeometric function. As a byproduct, we
will obtain a few nontrivial identities involving the Pochhammer symbol (the
rising factorial).

Before proceeding with this expansion, we will briefly recall the basic
notions used in the sequel to complete the paper. Then we will obtain
specific, nontrivial, useful identities involving the Pochhammer symbol and
related to Jacobi polynomials. This is done in Section  \ref{Pochh}. We will
use the obtained identities in Section \ref{HYP} to get several expansions of
the hypergeometric function in the orthogonal series of Jacobi polynomials. We
can also derive some simplifying identities for hypergeometric functions, as
discussed at the end of Section \ref{HYP}.

We will extensively use the so-called rising factorial or Pochhammer symbol
(polynomial), which is defined by
\[
(x)^{(n)}=x(x+1)\ldots(x+n-1),
\]
for all complex $x$. Notice that we have for all $x\neq0$ we have%
\begin{equation}
(x)^{(n)}=\frac{\Gamma(x+n)}{\Gamma(x)}\text{,} \label{Gn}%
\end{equation}
where $\Gamma(x)$ denotes the Euler's gamma function. One can also consider
the so-called falling factorials, denoted by $\left(  a\right)  _{\left(
n\right)  }$ and defined by
\[
\left(  a\right)  _{\left(  n\right)  }\allowbreak=\allowbreak\prod
_{j=0}^{n-1}\left(  a-j\right)  ,
\]
with $\left(  a\right)  _{\left(  0\right)  }\allowbreak=\allowbreak1$.

Let us notice that we have
\begin{align*}
\left(  a\right)  ^{\left(  n\right)  }\allowbreak &  =\allowbreak\left(
-1\right)  ^{n}\left(  -a\right)  _{\left(  n\right)  },\\
\left(  a\right)  _{\left(  n\right)  }\allowbreak &  =\allowbreak\left(
-1\right)  ^{n}\left(  -a\right)  ^{\left(  n\right)  },
\end{align*}
and of course direcly from the definition we have
\begin{equation}
\left(  -x-n+1\right)  ^{\left(  n\right)  }=\left(  -1\right)  ^{n}\left(
x\right)  ^{\left(  n\right)  }. \label{-x}%
\end{equation}
Let us also recall the widely used extension of the binomial symbol, namely
for all complex $\alpha$ we define:%
\[
\binom{\alpha}{n}\overset{df}{=}\frac{\left(  \alpha\right)  _{\left(
n\right)  }}{n!}%
\]

The Pochhammer symbol is often denoted by the symbol $\left(  x\right)  _{n}$
is the literature. We will use the above notation as more intuitive and used
on Wikipedia. Moreover, within the so-called $q$-series theory, the symbol
$\left(  a\right)  _{n}$ denotes the $q$-Pochhammer symbol (usually denoted in
general as $\left(  a|q\right)  _{n}$) when the parameter $q$ is well defined
and known.

In order to proceed further, we will need some auxiliary results concerning
properties of Pochhammer symbol.

In the sequel, we will often use the following formula, widely known as
Chu-Vandermonde's identity:%
\begin{equation}
\sum_{j=0}^{n}\binom{n}{j}\binom{\alpha}{j}\binom{\beta}{n-j}=\binom
{\alpha+\beta}{n}. \label{CV}%
\end{equation}
It is commonly known that (\ref{CV}), can be derived from the formula below,
known to be a particular case of the Gauss theorem giving the value of
$_{2}F_{1}\left(  a;b,c;1\right)  $ (defined by (\ref{Hyp}, below) for
$a\allowbreak=\allowbreak-n:$
\begin{equation}
\sum_{j=0}^{n}(-1)^{n}\binom{n}{j}\frac{\left(  b\right)  ^{\left(  j\right)
}}{\left(  c\right)  ^{\left(  j\right)  }}=\frac{\left(  c-b\right)
^{\left(  n\right)  }}{\left(  c\right)  ^{\left(  n\right)  }}, \label{Hyp2}%
\end{equation}
that is true for all complex $b$ and $c$ not equal to a negative integer and
integer $n\geq0$.

\section{Pochhammer symbol and Jacobi polynomials\label{Pochh}}

\begin{lemma}
\label{AUX}For all complex $x$ not equal to a non-positive integer and
$n\geq0,$ we have

1)
\begin{equation}
\frac{1}{\left(  x\right)  ^{\left(  n\right)  }}=\frac{1}{(n-1)!}\sum
_{j=0}^{n-1}\binom{n-1}{j}\frac{(-1)^{j}}{x+j}. \label{20}%
\end{equation}

2) Additionally, if $x\neq1,$ we have
\begin{align}
\sum_{j=0}^{n}\left(  -1\right)  ^{j}\binom{n}{j}\frac{(x+2j-1)}{\left(
x+j-1\right)  ^{\left(  n+1\right)  }}  &  =\delta_{n,0},\label{21}\\
\sum_{j=0}^{n}\left(  -1\right)  ^{j}\binom{n}{j}\frac{1}{\left(
x+j-1\right)  ^{\left(  j\right)  }\left(  x+2j\right)  ^{\left(  n-j\right)
}}  &  =\delta_{n,0}. \label{22}%
\end{align}

3) For all complex $x,y$ whose sum is not equal to an integer less or equal to
$1$ and $n\geq0,$ we have
\begin{equation}
1=\sum_{j=0}^{n}\binom{n}{j}\frac{\left(  x+j\right)  ^{\left(  n-j\right)
}\left(  y\right)  ^{\left(  j\right)  }}{\left(  y+x+j-1\right)  ^{\left(
j\right)  }\left(  y+x+2j\right)  ^{\left(  n-j\right)  }}.\label{23}%
\end{equation}

\end{lemma}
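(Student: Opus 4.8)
The plan is to prove the three parts in sequence, since the later identities can be bootstrapped from the earlier ones. Let me sketch each.

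For part (1), identity \eqref{20}, the natural approach is partial-fraction decomposition. Since $(x)^{(n)} = x(x+1)\cdots(x+n-1)$ is a product of $n$ distinct linear factors in $x$, its reciprocal expands as $\frac{1}{(x)^{(n)}} = \sum_{j=0}^{n-1} \frac{c_j}{x+j}$ for constants $c_j$. The residue at $x=-j$ is $c_j = \prod_{k\ne j}\frac{1}{(-j)+k} = \prod_{k\ne j}\frac{1}{k-j}$, and splitting this product over $k<j$ and $k>j$ gives $\frac{(-1)^j}{j!(n-1-j)!}$, which is exactly $\frac{(-1)^j}{(n-1)!}\binom{n-1}{j}$. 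This is routine and I expect no obstacle here.

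For the first equality in part (2), namely \eqref{21}, I would try to reduce it to something already established. The summand contains $\frac{x+2j-1}{(x+j-1)^{(n+1)}}$. The key observation is that $(x+j-1)^{(n+1)} = (x+j-1)(x+j)\cdots(x+j+n-1)$ spans indices symmetrically, and the factor $x+2j-1$ sitting at the "center" suggests this is a telescoping/symmetrization of the partial-fraction formula \eqref{20}. I would substitute the expansion of $\frac{1}{(x+j-1)^{(n+1)}}$ from \eqref{20} and try to collapse the resulting double sum, or alternatively recognize that $\frac{x+2j-1}{(x+j-1)^{(n+1)}}$ can be written as a difference $\frac{1}{(x+j-1)^{(n)}} - \frac{1}{(x+j)^{(n)}}$ (scaled appropriately), turning $\sum_j (-1)^j\binom{n}{j}(\cdots)$ into a discrete difference that telescopes to $\delta_{n,0}$. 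The binomial-weighted alternating sum $\sum_j (-1)^j\binom{n}{j} f(j)$ annihilates any polynomial in $j$ of degree $<n$, and more generally detects $n$-th finite differences, so the real content is exhibiting the summand as an $n$-fold difference of something that vanishes. Identity \eqref{22} I expect to follow from \eqref{21} by the algebraic rewriting $(x+j-1)^{(n+1)} = (x+j-1)^{(j)}\,(x+2j-1)\,(x+2j)^{(n-j)}$, which isolates the central factor $x+2j-1$ and cancels it against the numerator, directly producing the summand of \eqref{22}.

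Part (3), identity \eqref{23}, is the main obstacle and the deepest claim. The structure with two parameters $x,y$ and the shifted Pochhammer denominators $(y+x+j-1)^{(j)}(y+x+2j)^{(n-j)}$ strongly resembles a Chu–Vandermonde or Gauss-type summation in disguise. My plan is to regard the right-hand side as a function of the "total" parameter $s=x+y$ and attempt to recognize the sum as a terminating ${}_3F_2$ or a balanced hypergeometric series evaluable at $1$, using \eqref{Hyp2} (the Gauss/Chu–Vandermonde special case) after rewriting each Pochhammer ratio via \eqref{Gn} in terms of gamma functions. Alternatively, and perhaps more robustly, I would try induction on $n$: the $n=0$ case is trivially $1=1$, and I would seek a recurrence relating the degree-$n$ sum to the degree-$(n-1)$ sum by using Pascal's rule $\binom{n}{j}=\binom{n-1}{j}+\binom{n-1}{j-1}$ together with the factorization identities for Pochhammer symbols established in parts (1)–(2). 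The hard part will be controlling the denominators' index shifts under this recurrence; identity \eqref{22} looks custom-built to serve as the inductive lubricant, since it is precisely the vanishing alternating sum that will absorb the mismatched boundary terms. I would therefore complete parts (1) and (2) first and keep \eqref{22} in hand as the key lemma for closing the induction in part (3).
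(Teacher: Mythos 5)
Your part (1) is correct and complete, and it takes a genuinely different route from the paper: you compute the partial-fraction residues of $1/(x)^{(n)}$ directly, whereas the paper obtains (\ref{20}) by specializing the Gauss--Chu--Vandermonde identity (\ref{Hyp2}) with $b=x$, $c=x+1$. Likewise, your reduction of (\ref{22}) to (\ref{21}) through the factorization $(x+j-1)^{(n+1)}=(x+j-1)^{(j)}(x+2j-1)(x+2j)^{(n-j)}$ is exactly the paper's observation. The genuine gaps are in (\ref{21}) itself and in (\ref{23}).

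For (\ref{21}), your telescoping alternative rests on a false identity: a direct computation gives
\[
\frac{1}{(x+j-1)^{(n)}}-\frac{1}{(x+j)^{(n)}}=\frac{n}{(x+j-1)^{(n+1)}},
\]
so the numerator produced is the constant $n$, not $x+2j-1$, and no $j$-independent rescaling can repair this. One can instead write $\frac{x+2j-1}{(x+j-1)^{(n+1)}}=\frac{1-x}{n}\cdot\frac{1}{(x+j-1)^{(n)}}+\frac{x+2n-1}{n}\cdot\frac{1}{(x+j)^{(n)}}$ with unequal coefficients, but then neither of the two resulting alternating sums vanishes by itself (the summands are rational, not polynomial, in $j$, so the degree-$<n$ annihilation heuristic does not apply); one would still need the nontrivial evaluation $\sum_{j=0}^{n}(-1)^{j}\binom{n}{j}\frac{1}{(y+j)^{(n)}}=\frac{(n)^{(n)}}{(y)^{(2n)}}$ at $y=x-1$ and at $y=x$, followed by a cancellation. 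Your other suggestion---substituting (\ref{20}) and collapsing the double sum---is the paper's actual proof, but the collapse is the entire content: after grouping terms by $s=j+k$ one must show, using (\ref{CV}), that $\sum_{j}(x+2j-1)\binom{n}{j}\binom{n}{s-j}=(x-1+s)\binom{2n}{s}$, whereupon the sum reduces to $\sum_{s=0}^{2n}(-1)^{s}\binom{2n}{s}=0$ for $n\geq1$. That key step is absent from your sketch.

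For (\ref{23}), the plan would fail as stated. Converting the Pochhammer quotients via (\ref{Gn}), the term ratio contains the factor $\frac{x+y+2j+1}{x+y+2j-1}$, so the sum is not a balanced ${}_{3}F_{2}$ at $1$: it is a terminating very-well-poised ${}_{4}F_{3}$ at $-1$, with numerator parameters $x+y-1$, $\frac{x+y+1}{2}$, $y$, $-n$ and denominator parameters $\frac{x+y-1}{2}$, $x$, $x+y+n$. The Chu--Vandermonde identity (\ref{Hyp2}) is far too weak to sum such a series; what is needed is a well-poised summation theorem (this is precisely the alternative proof the paper's remark credits to Koornwinder, via formula 4.5(4) of Vol.~I of \cite{Bat23}), or else the paper's own route, which imports identity (2.9) of \cite{Szab25} with $b=x+y$, $a=x$, reorders the resulting double sum, and closes with (\ref{22}). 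Your instinct that (\ref{22}) is the key auxiliary fact agrees with the paper, but the induction-on-$n$ scheme via Pascal's rule is left undeveloped---you yourself flag the index-shift bookkeeping as the hard part---so parts (2) and (3) of your proposal remain plans rather than proofs.
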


\begin{proof}
i) We apply (\ref{Hyp2}) with $n\allowbreak=\allowbreak m-1,$ $c\allowbreak
=\allowbreak x+1,$ $b\allowbreak=\allowbreak x$ and get%
\[
\frac{\left(  1\right)  ^{\left(  m-1\right)  }}{\left(  x+1\right)  ^{\left(
m-1\right)  }}=\sum_{j=0}^{m-1}(-1)^{j}\frac{\left(  x\right)  ^{\left(
j\right)  }}{\left(  x+1\right)  ^{\left(  j\right)  }}=\sum_{j=0}%
^{m-1}(-1)^{j}\frac{x}{\left(  x+j\right)  }.
\]
Now it is enough to notice that $\left(  1\right)  ^{\left(  m-1\right)
}\allowbreak=\allowbreak\left(  m-1\right)  !$ and $x\left(  x+1\right)
^{\left(  m-1\right)  }\allowbreak=\allowbreak\left(  x\right)  ^{\left(
m\right)  }.$

ii) First notice that if $n\allowbreak=\allowbreak0$ then identity is true.
Hence, let us assume that $n\geq1.$ We get using 1) :%
\begin{align*}
&  \sum_{j=0}^{n}\left(  -1\right)  ^{j}\binom{n}{j}\frac{(x+2j-1)}{\left(
x+j-1\right)  ^{\left(  n+1\right)  }}\\
&  =\sum_{j=0}^{n}\left(  -1\right)  ^{j}\binom{n}{j}(x+2j-1)\frac{1}{n!}%
\sum_{k=0}^{n}\binom{n}{k}\frac{\left(  -1\right)  ^{k}}{x-1+j+k}\\
&  =\frac{1}{n!}\sum_{s=0}^{n}\frac{\left(  -1\right)  ^{s}}{x-1+s}\sum
_{j=0}^{s}\left(  x+2j-1\right)  \binom{n}{j}\binom{n}{s-j}\\
&  +\sum_{s=n+1}^{2n}\frac{\left(  -1\right)  ^{s}}{x-1+s}\sum_{j=s-n}%
^{n}\left(  x+2j-1\right)  \binom{n}{j}\binom{n}{s-j}.
\end{align*}

We have further
\begin{gather*}
\sum_{j=0}^{s}\left(  x+2j-1\right)  \binom{n}{j}\binom{n}{s-j}=(x-1)\binom
{2n}{s}+2\sum_{j=0}^{s}j\binom{n}{j}\binom{n}{s-j}\\
=(x-1)\binom{2n}{s}+2n\sum_{k=0}^{s-1}\binom{n-1}{k}\binom{n}{s-1-k}\\
=(x-1)\binom{2n}{s}+2n\binom{2n-1}{s-1}=(x-1+s)\binom{2n}{s}.
\end{gather*}
In the penultimate line of the above calculations, we applied (\ref{CV}). We
also have setting $k\allowbreak=\allowbreak n-j$%
\begin{align*}
&  \sum_{j=s-n}^{n}\left(  x+2j-1\right)  \binom{n}{j}\binom{n}{s-j}\\
&  =\sum_{k=0}^{2n-s}(x+2k-2n-1)\binom{n}{s-n+k}\binom{n}{k}\\
&  =(x-1-2n)\binom{2n}{2n-s}+2\sum_{k=0}^{2n-s}k\binom{n}{s-n+k}\binom{n}{k}\\
&  =(x-1-2n)\binom{2n}{2n-s}+(2n-s)\binom{2n}{2n-s}\\
&  =+(x-1+s)\binom{2n}{s}.
\end{align*}
Again we have applied (\ref{CV}) in the above-mentioned calculations. Hence we
get
\begin{align*}
&  \sum_{j=0}^{n}\left(  -1\right)  ^{j}\binom{n}{j}\frac{(x+2j-1)}{\left(
x+j-1\right)  ^{\left(  n+1\right)  }}\\
&  =\sum_{s=0}^{2n}\frac{(-1)^{s}}{x-1+s}(x-1+s)\binom{2n}{s}=0,
\end{align*}
for $n\geq1.$

2) (\ref{22}) is the variation of (\ref{21}), since
\[
\frac{1}{\left(  x+j-1\right)  ^{\left(  j\right)  }\left(  x+2j\right)
^{\left(  n-j\right)  }}=\frac{(x+2j-1)}{\left(  x+j-1\right)  ^{\left(
n+1\right)  }}.
\]

3) We will apply the identity (\cite{Szab25}(2.9)) with $b\allowbreak
=\allowbreak x+y$ and $a\allowbreak=\allowbreak x$. We get%
\begin{align*}
1  &  =\sum_{s=0}^{n}\binom{n}{s}\frac{\left(  x+s\right)  ^{\left(
n-s\right)  }}{\left(  y+x+s-1\right)  ^{\left(  s\right)  }\left(
y+x+2s\right)  ^{\left(  n-s\right)  }}\\
&  \times\sum_{j=0}^{s}\left(  -1\right)  ^{s-j}\binom{s}{j}\left(
y+x+s-1\right)  ^{\left(  j\right)  }\left(  x+j\right)  ^{\left(  s-j\right)
}\\
&  =\sum_{j=0}^{n}\binom{n}{s}\sum_{s=j}^{n}\left(  -1\right)  ^{s-j}%
\binom{n-j}{s-j}\frac{\left(  x+j\right)  ^{\left(  s-j\right)  }\left(
x+s\right)  ^{\left(  n-s\right)  }\left(  y+x+s-1\right)  ^{\left(  j\right)
}}{\left(  y+x+s-1\right)  ^{\left(  s\right)  }\left(  y+x+2s\right)
^{\left(  n-s\right)  }}%
\end{align*}%
\begin{align*}
&  =\sum_{j=0}^{n}\binom{n}{s}\sum_{m=0}^{n-j}\left(  -1\right)  ^{m}%
\binom{n-j}{m}\times\\
&  \frac{\left(  x+j\right)  ^{\left(  m\right)  }\left(  x+m+j\right)
^{\left(  n-j-m\right)  }}{\left(  y+x+2j+m-1\right)  ^{\left(  m\right)
}\left(  y+x+2j+2m\right)  ^{\left(  n-j-m\right)  }}\\
&  =\sum_{j=0}^{n}\binom{n}{s}\left(  x+j\right)  ^{\left(  n-j\right)  }\\
&  \times\sum_{m=0}^{n-j}\left(  -1\right)  ^{m}\binom{n-j}{m}\frac{1}{\left(
y+x+2j+m-1\right)  ^{\left(  m\right)  }\left(  y+x+2j+2m\right)  ^{\left(
n-j-m\right)  }}.
\end{align*}
Now we apply (\ref{22}).
\end{proof}

\begin{remark}
The proof of (\ref{20}) was suggested by Tom Koornwinder in a private
communication. Similarly, Tom Koornwinder noticed that (\ref{21}) follows the
direct application of Dixon's formula \cite{NIST}(16.4.4) with $a\allowbreak
=\allowbreak x-1,$ $b\allowbreak=\allowbreak-n,$ $c\allowbreak=\allowbreak
(x+1)/2$. He also noticed that (\ref{23}) can be proved by applying the
formula Vol. I, 4.5(4): of \cite{Bat23} with $c\allowbreak=\allowbreak-n,$
$a\allowbreak=\allowbreak x+y-1,$ $b\allowbreak=\allowbreak y$.
\end{remark}

Let us recall the definition of Beta distribution. 

On one hand, we have the distribution with the density:%
\[
f(x|a,b)=%
\begin{cases}
0\text{,} & \text{if }x\notin\lbrack0,1]\text{;}\\
x^{a-1}(1-x)^{b-1}/B(a,b)\text{,} & \text{if }0\leq x\leq1\text{,}%
\end{cases}
\]
where $B(a,b)$ is Euler's beta function, which is defined for all real $a,b$
such that $a,b>0$.

On the other hand, the following function is also called the density of Beta
distributions. It has the following density:%
\[
h(x|a,b)=%
\begin{cases}
(x+1)^{a-1}(1-x)^{b-1}/(B(a,b)2^{a+b-1})\text{,} & \text{if }\left\vert
x\right\vert \leq1\text{;}\\
0\text{,} & \text{if otherwise.}%
\end{cases}
\]
Notice that the above definitions of Beta distribution are somewhat different
from those presented in books on special functions. The definitions of this
paper are probabilistic in origin. The difference is not big but one has to be
aware of it.

It is common knowledge (see, e.g., \cite{Andrews1999}) that the polynomials
that are orthogonal with respect to $h$ are Jacobi polynomials defined by the
formula:%
\begin{equation}
J_{n}(x|a,b)=\frac{1}{n!}\sum_{m=0}^{n}\binom{n}{m}\left(  a+b+n-1\right)
^{\left(  m\right)  }(b+m)^{(n-m)}(x-1)^{m}/2^{m}. \label{J1}%
\end{equation}
Following a simple change of variables under the integral, we deduce, that the
following family of polynomials:%
\begin{equation}
K_{n}(x|a,b)=\frac{1}{n!}\sum_{m=0}^{n}\binom{n}{m}\left(  a+b+n-1\right)
^{\left(  m\right)  }(b+m)^{(n-m)}(x-1)^{m}, \label{Ja}%
\end{equation}
is orthogonal with respect to the distribution with the density $f$.

\begin{remark}
Let's make it clear that polynomials $\left\{  J_{n}\right\}  $ and $\left\{
K_{n}\right\}  $ are defined for all complex values of parameters $a$ and $b$,
but they are orthogonal with respect to positive measure only when they are
actually positive.
\end{remark}

The following two families of rational functions of parameters are important
for our purpose. Let us denote by
\begin{align}
e_{n,m}(a,b)  &  =\binom{n}{m}\left(  a+b+n-1\right)  ^{\left(  m\right)
}(b+m)^{(n-m)}/n!,\label{cnj}\\
\tilde{e}_{n,m}(a,b)  &  =(-1)^{n-m}\frac{n!\left(  b+m\right)  ^{\left(
n-m\right)  }}{(n-m)!\left(  a+b+m-1\right)  ^{\left(  m\right)  }\left(
a+b+2m\right)  ^{\left(  n-m\right)  }}\label{dnj}\\
&  =(-1)^{n-m}\frac{n!\left(  b+m\right)  ^{\left(  n-m\right)  }\left(
a+b+2m-1\right)  }{(n-m)!\left(  a+b+m-1\right)  ^{\left(  n+1\right)  }}.
\label{dnj2}%
\end{align}

Simply, because we have%
\begin{align}
J_{n}(x|a,b)\allowbreak &  =\allowbreak\sum_{m=0}^{n}e_{n,m}(a,b)(x-1)^{m}%
/2^{m},\label{fex1}\\
K_{n}(x|a,b)  &  =\sum_{m=0}^{n}e_{n,m}(a,b)(x-1)^{m}.\label{kex1}\\
\left(  x-1\right)  ^{n}/2^{n}  &  =\sum_{m=0}^{n}\tilde{e}_{n,m}%
(a,b)J_{m}(x|a,b),\label{fex2}\\
\left(  x-1\right)  ^{n}  &  =\sum_{m=0}^{n}\tilde{e}_{n,m}(a,b)K_{m}(x|a,b).
\label{kex2}%
\end{align}

\begin{remark}
Notice that since (\ref{fex1}) and (\ref{fex2}) concern polynomials in
unknowns $x,$ $a,$ $b$ we can extend these identities to all complex values of
these unknowns. The first of these identities is obvious given (\ref{J1}). The
second one is proved either in \cite{IA} (section 4.2) or in \cite{Szab25}
(section 2).
\end{remark}

Following formula (4.1.5) of \cite{IA}, we deduce that polynomials $\left\{
K_{n}\right\}  $ and $\left\{  J_{n}\right\}  $ are not monic. The coefficient
by $x^{n}$ in $K_{n}$ is equal to%
\begin{equation}
\frac{\left(  a+b+n-1\right)  ^{\left(  n\right)  }}{n!2^{n}}. \label{lead}%
\end{equation}

It is also known that for $n\geq1$ we have
\[
\int_{-1}^{1}J_{n}^{2}(x|a,b)h\left(  x|a,b\right)  dx=\frac{\left(  a\right)
^{\left(  n\right)  }\left(  b\right)  ^{\left(  n\right)  }}{n!\left(
a+b+2n-1\right)  \left(  a+b\right)  ^{\left(  n-1\right)  }}.
\]
One can also show, by the simple change of the variable under the integral
that for $n\geq1$ we have
\begin{equation}
\int_{0}^{1}K_{n}^{2}(x|a,b)f\left(  x|a,b\right)  dx=\frac{\left(  a\right)
^{\left(  n\right)  }\left(  b\right)  ^{\left(  n\right)  }}{n!\left(
a+b+2n-1\right)  \left(  a+b\right)  ^{\left(  n-1\right)  }}.\label{SQR}%
\end{equation}

Let us remark that the particular cases of Jacobi polynomials are commonly
used under other names. We recently recalled those specific cases and their
traditional names in \cite{Szab25}.

In the sequel, we will need the following auxiliary result.

\begin{proposition}
For all $n\geq0,$ $a,b>0,$ we have

i)%
\begin{align}
(-1)^{n}J_{n}(-x|a,b)  &  =J_{n}(x|b,a),\label{O1}\\
J_{n}(2x-1|a,b)  &  =K_{n}(x|a,b),\label{O1+}\\
(-1)^{n}K_{n}(x|a,b)  &  =K_{n}(1-x|b,a). \label{O2}%
\end{align}

ii) Let us assume additionally that $c>0$, then we have%
\begin{align*}
\int_{0}^{1}K_{n}(x|a,c)f(x|a,b)dx  &  =\frac{\left(  a\right)  ^{\left(
n\right)  }\left(  c-b\right)  ^{\left(  n\right)  }}{n!\left(  a+b\right)
^{\left(  n\right)  }},\\
\int_{0}^{1}K_{n}(x|c,b)f(x|a,b)dx  &  =(-1)^{n}\frac{\left(  b\right)
^{\left(  n\right)  }\left(  c-a\right)  ^{\left(  n\right)  }}{n!\left(
a+b\right)  ^{\left(  n\right)  }},
\end{align*}

iii) and%
\begin{equation}
K_{n}(0|a,b)=\frac{(-1)^{n}}{n!}\left(  a\right)  ^{\left(  n\right)  }%
,~K_{n}(1|a,b)=\frac{1}{n!}\left(  b\right)  ^{\left(  n\right)  }.
\label{K01}%
\end{equation}

\end{proposition}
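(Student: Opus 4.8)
The plan is to establish the three symmetry relations of part i) first, and then use them to reduce the integrals of part ii) and the special values of part iii) to short computations. The identity \eqref{O1+} is immediate: substituting $x\mapsto 2x-1$ in \eqref{J1} turns the factor $((2x-1)-1)^m/2^m=(2(x-1))^m/2^m$ into $(x-1)^m$, which is exactly the $m$-th summand of \eqref{Ja}, so $J_n(2x-1|a,b)=K_n(x|a,b)$. For \eqref{O1} I would argue by orthogonality. Since $B(a,b)=B(b,a)$ we have $h(-x|b,a)=h(x|a,b)$, so the substitution $x\mapsto -x$ shows that the family $\{(-1)^nJ_n(-x|a,b)\}_n$ is orthogonal with respect to $h(x|b,a)$. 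From \eqref{J1} the leading coefficient of $J_n(\cdot|a,b)$ is the symmetric quantity $(a+b+n-1)^{(n)}/(n!\,2^n)$, so $(-1)^nJ_n(-x|a,b)$ and $J_n(x|b,a)$ share the same leading coefficient; by uniqueness (up to scale) of the orthogonal family for the positive weight $h(\cdot|b,a)$ (here $a,b>0$) they coincide, giving \eqref{O1}. A direct algebraic proof re-expanding $(-x-1)^m$ in powers of $x-1$ via \eqref{CV} is also possible, but the orthogonality route is cleaner. Finally, \eqref{O2} follows by composing the first two: writing $K_n(x|b,a)=J_n(2x-1|b,a)$ through \eqref{O1+} and then applying \eqref{O1} gives $J_n(2x-1|b,a)=(-1)^nJ_n(1-2x|a,b)=(-1)^nK_n(1-x|a,b)$, so $K_n(x|b,a)=(-1)^nK_n(1-x|a,b)$, which is \eqref{O2} after interchanging $a$ and $b$.

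For the first integral in part ii) I would expand $K_n(x|a,c)$ by \eqref{Ja} and integrate term by term, using the elementary Beta moment $\int_0^1(x-1)^m f(x|a,b)\,dx=(-1)^m B(a,b+m)/B(a,b)=(-1)^m (b)^{(m)}/(a+b)^{(m)}$. After writing $\binom{n}{m}(-1)^m=(-n)^{(m)}/m!$ and $(c+m)^{(n-m)}=(c)^{(n)}/(c)^{(m)}$, the integral becomes $\frac{(c)^{(n)}}{n!}$ times the terminating series ${}_3F_2(-n,\,a+c+n-1,\,b;\,c,\,a+b;\,1)$. The key observation is that this series is Saalschützian, since its parameters satisfy the balance condition $c+(a+b)=(a+c+n-1)+b-n+1$. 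Applying the Pfaff--Saalschütz summation theorem and then simplifying the two Pochhammer factors of negative argument, namely $(c-(a+c+n-1))^{(n)}=(-a-n+1)^{(n)}$ and $(-(a+b)-n+1)^{(n)}$, by the reflection identity \eqref{-x}, yields $\frac{(a)^{(n)}(c-b)^{(n)}}{n!\,(a+b)^{(n)}}$. This evaluation is the main obstacle; everything else is bookkeeping.

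The second integral I would obtain from the first by symmetry rather than by repeating the computation. Using \eqref{O2} in the form $K_n(x|c,b)=(-1)^nK_n(1-x|b,c)$, the substitution $u=1-x$ together with $f(1-u|a,b)=f(u|b,a)$ turns $\int_0^1 K_n(x|c,b)f(x|a,b)\,dx$ into $(-1)^n\int_0^1 K_n(u|b,c)f(u|b,a)\,du$, which is an instance of the first formula with $a,b,c$ replaced by $b,a,c$ and hence equals $(-1)^n\frac{(b)^{(n)}(c-a)^{(n)}}{n!\,(a+b)^{(n)}}$. Alternatively one can compute it directly: here the factor $(b+m)^{(n-m)}$ cancels the $(b)^{(m)}$ coming from the moment, the sum collapses to a ${}_2F_1(-n,\,c+b+n-1;\,a+b;\,1)$, and Gauss's summation \eqref{Hyp2} together with \eqref{-x} finishes it.

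For part iii), the value $K_n(1|a,b)=(b)^{(n)}/n!$ is read off \eqref{Ja} directly, since $(x-1)^m$ vanishes at $x=1$ for $m\geq1$ and only the $m=0$ term $(b)^{(n)}/n!$ survives. The value at $0$ then follows from \eqref{O2} evaluated at $x=0$: we get $(-1)^nK_n(0|a,b)=K_n(1|b,a)=(a)^{(n)}/n!$, whence $K_n(0|a,b)=(-1)^n(a)^{(n)}/n!$, completing \eqref{K01}.
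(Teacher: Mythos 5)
Your proof is correct, but it departs from the paper's at the two places that matter. For part i), the paper quotes \eqref{O1} from Ismail's book and proves \eqref{O2} by a long direct expansion of $K_n(1-x|b,a)$ resting on formula (2.8) of \cite{Szab25}; you instead prove \eqref{O1} from scratch (the weight symmetry $h(-x|b,a)=h(x|a,b)$, the fact that the leading coefficient \eqref{lead} is symmetric in $a,b$, and uniqueness of the orthogonal family for the positive weight when $a,b>0$) and then get \eqref{O2} for free by composing \eqref{O1} with \eqref{O1+} --- shorter, self-contained, and it also yields the second integral of ii) by the substitution $u=1-x$ with $f(1-u|a,b)=f(u|b,a)$, a reduction the paper leaves implicit. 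For the first integral of ii), the paper argues indirectly: it expands the moments $M_n(a,b)$ in the basis $K_j(\cdot|a,c)$ via the coefficients $\tilde e_{n,j}$ and reduces the claim to a Pochhammer identity settled with Lemma \ref{AUX} (3) and formula (2.9) of \cite{Szab25}. You compute the integral head-on, recognize the terminating series as the balanced ${}_3F_2\left(-n,\,a+c+n-1,\,b;\,c,\,a+b;\,1\right)$ --- your balance check $c+(a+b)=(a+c+n-1)+b-n+1$ is right --- and invoke Pfaff--Saalsch\"utz, cleaning up the two negative-argument Pochhammer factors with \eqref{-x}; this is precisely the shortcut the paper's own remark (crediting Koornwinder) notes for the equivalent identity \eqref{11}. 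The trade-off is worth seeing: your route is faster and rests on one classical summation theorem, while the paper's deliberately avoids Pfaff--Saalsch\"utz so that the corollary identities \eqref{11}--\eqref{I4} emerge as independent byproducts of its elementary Pochhammer machinery; under your proof those identities become logically equivalent to Pfaff--Saalsch\"utz rather than newly derived. Part iii) you handle exactly as the paper does. Both arguments are complete.
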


\begin{proof}
i) The first of these identities is shown, e.g., in (see \cite{IA} (4.14)).
The second one follows the fact that if $x\in\lbrack0,1],$ then $2x-1\in
\lbrack-1,1]$ and that $2h(2x-1|a,b)\allowbreak=\allowbreak f(x|a,b).$ So by a
suitable change of variables orthogonality of polynomials $\left\{
J_{n}\right\}  $ with respect to $h$ is equivalent to orthogonality of
polynomials $\left\{  K_{n}\right\}  $ with respect to $f$.

Now, let us prove the third one. We have
\begin{gather*}
K_{n}(1-x|b,a)=\frac{1}{n!}\sum_{m=0}^{n}\binom{n}{m}\left(  a+b+n-1\right)
^{\left(  m\right)  }(a+m)^{(n-m)}(1-x-1)^{m}\\
=\frac{1}{n!}\sum_{m=0}^{n}\binom{n}{m}\left(  a+b+n-1\right)  ^{\left(
m\right)  }(a+m)^{(n-m)}(-1)^{m}(x-1+1)^{m}\\
=\frac{1}{n!}\sum_{m=0}^{n}\binom{n}{m}\left(  a+b+n-1\right)  ^{\left(
m\right)  }(a+m)^{(n-m)}(-1)^{m}\sum_{k=0}^{m}\binom{m}{k}(x-1)^{k}%
\end{gather*}%
\begin{gather*}
=\frac{1}{n!}(-1)^{n}\sum_{k=0}^{n}\binom{n}{k}(x-1)^{k}\left(
a+b+n-1\right)  ^{\left(  k\right)  }\\
\times\sum_{m\neq k}^{n}\binom{n-k}{m-k}(-1)^{n-m}\left(  a+b+n-1+k\right)
^{\left(  m-k\right)  }(a+m)^{(n-k-(m-k))}%
\end{gather*}%
\begin{gather*}
=\frac{1}{n!}(-1)^{n}\sum_{k=0}^{n}\binom{n}{k}(x-1)^{k}\left(
a+b+n-1\right)  ^{\left(  k\right)  }\\
\times\sum_{s=0}^{n-k}\binom{n-k}{s}(-1)^{n-k-s}\left(  a+k+b+k+n-k-1\right)
^{\left(  s\right)  }(a+k+s)^{(n-k-s)}%
\end{gather*}%
\[
=\frac{1}{n!}(-1)^{n}\sum_{k=0}^{n}\binom{n}{k}(x-1)^{k}\left(
a+b+n-1\right)  ^{\left(  k\right)  }\left(  b+k\right)  ^{\left(  n-k\right)
}=\left(  -1\right)  ^{n}K_{n}(x|a,b).
\]

In the last line, we applied formula (2.8) of \cite{Szab25}.

ii) First, let us calculate
\begin{align*}
M_{k}(a,b)  &  =\int_{0}^{1}(x-1)^{k}f\left(  x|a,b\right)  dx=\frac{\left(
-1\right)  ^{k}}{B(a,b)}\int_{0}^{1}x^{a-1}(1-x)^{k+b-1}dx\\
&  =\frac{\left(  -1\right)  ^{k}B(a,k+b)}{B(a,b)}=(-1)^{k}\frac
{\Gamma(a)\Gamma(b+k)\Gamma(a+b)}{\Gamma(a+b+k)\Gamma(a)\Gamma(b)}=\left(
-1\right)  ^{k}\frac{\left(  b\right)  ^{\left(  k\right)  }}{\left(
a+b\right)  ^{\left(  k\right)  }}.
\end{align*}

Now, instead of showing that
\begin{gather}
\frac{\left(  a\right)  ^{\left(  n\right)  }\left(  c-b\right)  ^{\left(
n\right)  }}{n!\left(  a+b\right)  ^{\left(  n\right)  }}=\int_{0}^{1}%
K_{n}(x|a,c)f(x|a,b)dx\label{dN1}\\
=\frac{1}{n!}\sum_{k=0}^{n}\binom{n}{k}\left(  a+c+n-1\right)  ^{\left(
k\right)  }(c+k)^{(n-k)}M_{k}(a,b).\nonumber
\end{gather}
We will use the fact that
\[
M_{n}(a,b)=\sum_{j=0}^{n}\tilde{e}_{n,j}(a,c)\int_{0}^{1}K_{j}%
(x|a,c)f(x|a,b)dx.
\]
In other words, we will show that
\begin{align*}
\left(  -1\right)  ^{n}\frac{\left(  b\right)  ^{\left(  n\right)  }}{\left(
a+b\right)  ^{\left(  n\right)  }}  &  =\sum_{m=0}^{n}(-1)^{n-m}%
\frac{n!\left(  b+m\right)  ^{\left(  n-m\right)  }}{(n-m)!\left(
a+b+m-1\right)  ^{\left(  m\right)  }\left(  a+b+2m\right)  ^{\left(
n-m\right)  }}\\
&  \times\frac{\left(  a\right)  ^{\left(  m\right)  }\left(  c-b\right)
^{\left(  m\right)  }}{m!\left(  a+b\right)  ^{\left(  m\right)  }}.
\end{align*}
After cancelling out $(-1)^{n}$ and multiplying both sides by $\left(
a+b\right)  ^{\left(  n\right)  }$, we have to show that :%
\[
\left(  b\right)  ^{\left(  n\right)  }=\sum_{m=0}^{n}(-1)^{m}\binom{n}%
{m}\frac{\left(  c+m\right)  ^{\left(  n-m\right)  }\left(  a\right)
^{\left(  m\right)  }\left(  c-b\right)  ^{\left(  m\right)  }\left(
a+b+m\right)  ^{\left(  n-m\right)  }}{\left(  a+c+m-1\right)  ^{\left(
m\right)  }\left(  a+c+2m\right)  ^{\left(  n-m\right)  }}.
\]

Now, we apply the following formula (2.9) from \cite{Szab25} (Lemma 2) applied
with $b\allowbreak=\allowbreak a+c$ and $a\allowbreak=\allowbreak a+b,$ so
that $b-a\allowbreak=\allowbreak c-b$.

We have further:%

\begin{align*}
\left(  b\right)  ^{\left(  n\right)  }  &  =\sum_{m=0}^{n}(-1)^{m}\binom
{n}{m}\frac{\left(  c+m\right)  ^{\left(  n-m\right)  }\left(  a\right)
^{\left(  m\right)  }\left(  c-b\right)  ^{\left(  m\right)  }\left(
a+b+m\right)  ^{\left(  n-m\right)  }}{\left(  a+c+m-1\right)  ^{\left(
m\right)  }\left(  a+c+2m\right)  ^{\left(  n-m\right)  }}\\
&  \times\sum_{j-0}^{m}\left(  -1\right)  ^{m-j}\binom{m}{j}\left(
a+c+m-1\right)  ^{\left(  j\right)  }\left(  a+b+j\right)  ^{\left(
m-j\right)  }\\
&  =\sum_{j=0}^{n}\left(  -1\right)  ^{j}\binom{n}{j}\left(  a+b+j\right)
^{\left(  n-j\right)  }\left(  a\right)  ^{\left(  j\right)  }\\
&  \times\sum_{m=j}^{n}\binom{n-j}{m-j}\frac{\left(  c+m\right)  ^{\left(
n-m\right)  }\left(  a+j\right)  ^{\left(  m-j\right)  }\left(
a+c+m-1\right)  ^{\left(  j\right)  }}{\left(  a+c+m-1\right)  ^{\left(
m\right)  }\left(  a+c+2m\right)  ^{\left(  n-m\right)  }}.
\end{align*}
After denoting $s\allowbreak=\allowbreak m-j,$ we have%
\begin{align*}
\left(  b\right)  ^{\left(  n\right)  }  &  =\sum_{j=0}^{n}\left(  -1\right)
^{j}\binom{n}{j}\left(  a+b+j\right)  ^{\left(  n-j\right)  }\left(  a\right)
^{\left(  j\right)  }\\
&  \times\sum_{s=1}^{n-j}\binom{n-j}{s}\frac{\left(  c+j+s\right)  ^{\left(
n-j-s\right)  }\left(  a+j\right)  ^{\left(  s\right)  }}{\left(
a+c+2j+s-1\right)  ^{\left(  s\right)  }\left(  a+c+2j+2s\right)  ^{\left(
n-j-s\right)  }}.
\end{align*}
But, following Lemma \ref{AUX} (3)) with $x\allowbreak=\allowbreak c+j,$
$y\allowbreak=\allowbreak a+j,$ we deduce that
\[
\sum_{s=1}^{n-j}\binom{n-j}{s}\frac{\left(  c+j+s\right)  ^{\left(
n-j-s\right)  }\left(  a+j\right)  ^{\left(  s\right)  }}{\left(
a+c+2j+s-1\right)  ^{\left(  s\right)  }\left(  a+c+2j+2s\right)  ^{\left(
n-j-s\right)  }}=1.
\]
Further, we apply formula (2.8) from \cite{Szab25}.

iii) The value at $1$ is obvious, following (\ref{kex1}). For the value at $0$
we have apply (\ref{O2}).
\end{proof}

The following identities are received immediately as a corollary.

\begin{corollary}
For all complex $a,$ $b,$ $c$ such that and $n\geq0$ we have%
\begin{equation}
\frac{\left(  a\right)  ^{\left(  n\right)  }\left(  c-b\right)  ^{\left(
n\right)  }}{\left(  a+b\right)  ^{\left(  n\right)  }}=\sum_{k=0}^{n}\left(
-1\right)  ^{k}\binom{n}{k}\frac{\left(  a+c+n-1\right)  ^{\left(  k\right)
}(c+k)^{(n-k)}\left(  b\right)  ^{\left(  k\right)  }}{\left(  a+b\right)
^{\left(  k\right)  }}, \label{11}%
\end{equation}

\begin{equation}
\frac{\left(  a\right)  ^{\left(  n\right)  }\left(  b\right)  ^{\left(
n\right)  }}{\left(  c\right)  ^{\left(  n\right)  }}=\sum_{k=0}^{n}\left(
-1\right)  ^{k}\binom{n}{k}\frac{\left(  b+c+n-1\right)  ^{\left(  k\right)
}\left(  b+c-a+k\right)  ^{\left(  n-k\right)  }\left(  c-a\right)  ^{\left(
k\right)  }}{\left(  c\right)  ^{\left(  k\right)  }}, \label{12}%
\end{equation}

\begin{align}
\left(  a\right)  ^{\left(  n\right)  }\left(  c-b\right)  ^{\left(  n\right)
}  &  =\sum_{k=0}^{n}\left(  -1\right)  ^{k}\binom{n}{k}\left(
a+c+n-1\right)  ^{\left(  k\right)  }\label{13}\\
&  \times(c+k)^{(n-k)}\left(  b\right)  ^{\left(  k\right)  }\left(
a+b+k\right)  ^{\left(  n-k\right)  },\nonumber
\end{align}

\begin{align}
\left(  a\right)  ^{\left(  n\right)  }\left(  b\right)  ^{\left(  n\right)
}  &  =\sum_{k=0}^{n}\left(  -1\right)  ^{k}\binom{n}{k}\left(
a+b+c+n-1\right)  ^{\left(  k\right)  }\label{I4}\\
&  \times\left(  c+b+k\right)  ^{\left(  n-k\right)  }\left(  c\right)
^{\left(  k\right)  }\left(  a+c+k\right)  ^{\left(  n-k\right)  }.\nonumber
\end{align}

In particular, we have
\begin{align}
\left(  a+b-1\right)  \sum_{j=0}^{n}\left(  -1\right)  ^{j}\binom{n}{j}\left(
a+b+j\right)  ^{\left(  n-1\right)  }  &  =\delta_{n,0},\label{001}\\
\sum_{j=0}^{n}\left(  -1\right)  ^{j}\binom{n}{j}\left(  a+b+j\right)
^{\left(  n-j\right)  }\left(  a+b+n-1\right)  ^{\left(  j\right)  }  &
=\delta_{n,0}, \label{002}%
\end{align}

\end{corollary}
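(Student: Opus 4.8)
The plan is to read off all six identities from the two integral evaluations in part (ii) of the Proposition by expanding the polynomial $K_{n}$ in powers of $(x-1)$ and integrating term by term. Inserting the expansion (\ref{kex1}), $K_{n}(x|a,c)=\sum_{m=0}^{n}e_{n,m}(a,c)(x-1)^{m}$, into the first formula of part (ii) and using the moment $M_{m}(a,b)=\int_{0}^{1}(x-1)^{m}f(x|a,b)\,dx=(-1)^{m}(b)^{(m)}/(a+b)^{(m)}$ computed in the Proposition's proof, I obtain
\[
\frac{(a)^{(n)}(c-b)^{(n)}}{n!\,(a+b)^{(n)}}=\frac{1}{n!}\sum_{k=0}^{n}(-1)^{k}\binom{n}{k}\frac{(a+c+n-1)^{(k)}(c+k)^{(n-k)}(b)^{(k)}}{(a+b)^{(k)}}.
\]
Cancelling $1/n!$ gives (\ref{11}) at once. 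First I would record this computation; everything else is algebra on (\ref{11}).

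Then (\ref{12}) is (\ref{11}) after the parameter change $(a,b,c)\mapsto(a,\,c-a,\,b+c-a)$: the left side becomes $(a)^{(n)}(b)^{(n)}/(c)^{(n)}$ and the summand turns into the one displayed in (\ref{12}). Identity (\ref{13}) is obtained from (\ref{11}) by clearing the denominator $(a+b)^{(n)}$ and using $(a+b)^{(n)}/(a+b)^{(k)}=(a+b+k)^{(n-k)}$. Finally (\ref{I4}) is (\ref{13}) under the substitution $(a,b,c)\mapsto(a,\,c,\,b+c)$, whose left side is $(a)^{(n)}(b)^{(n)}$ and whose summand is exactly that of (\ref{I4}). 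Each of these is a one-line verification, so the only care needed is bookkeeping of the Pochhammer indices.

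For the two particular cases I would set $c=b$ in (\ref{13}). The left side is $(a)^{(n)}(0)^{(n)}$, which is $0$ for $n\geq1$ and $1$ for $n=0$; on the right the factors $(b)^{(k)}$ and $(b+k)^{(n-k)}$ collapse via $(b)^{(k)}(b+k)^{(n-k)}=(b)^{(n)}$, so after extracting $(b)^{(n)}$ the identity reads
\[
(a)^{(n)}(0)^{(n)}=(b)^{(n)}\sum_{k=0}^{n}(-1)^{k}\binom{n}{k}(a+b+n-1)^{(k)}(a+b+k)^{(n-k)}.
\]
Viewing this as a polynomial identity in $a,b$ and dividing by the nonzero polynomial $(b)^{(n)}$ yields (\ref{002}) (the summand being $(a+b+k)^{(n-k)}(a+b+n-1)^{(k)}$). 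Identity (\ref{001}) is of a more elementary nature: since $(a+b+j)^{(n-1)}$ is a polynomial in $j$ of degree $n-1<n$, its $n$-th finite difference $\sum_{j}(-1)^{j}\binom{n}{j}(a+b+j)^{(n-1)}$ vanishes for $n\geq1$ (the same vanishing that underlies (\ref{CV})); multiplying by $(a+b-1)$ and checking the value $1$ at $n=0$ via the convention $(a+b)^{(-1)}=1/(a+b-1)$ gives (\ref{001}).

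The last point to settle --- and the only genuinely non-routine one --- is the passage from $a,b,c>0$, where part (ii) is valid, to arbitrary complex parameters. Here I would note that, after clearing denominators, each of (\ref{11})--(\ref{I4}) is an identity between polynomials in $(a,b,c)$ that holds on the open set $\{a,b,c>0\}$; since this set is Zariski-dense in $\mathbb{C}^{3}$, the identity theorem for polynomials forces equality for all complex $a,b,c$, whence the rational forms hold wherever the denominators are nonzero. The main obstacle is thus not any single computation but keeping the substitutions and index shifts consistent across the four equivalent forms while correctly handling the degenerate evaluations producing (\ref{001}) and (\ref{002}).
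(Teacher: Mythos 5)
Your proposal is correct and follows essentially the paper's own route: your derivation of (\ref{11}) is exactly the paper's (\ref{dN1}) — Proposition ii) combined with the expansion (\ref{kex1}) and the moments $M_{k}(a,b)$ — and your treatment of (\ref{12}), (\ref{13}) and (\ref{002}) uses the same substitutions and denominator-clearing that the paper does. Your minor deviations are harmless and, if anything, cleaner: you obtain (\ref{I4}) from (\ref{13}) via $(a,b,c)\mapsto(a,c,b+c)$ instead of multiplying (\ref{12}) by $(c)^{(n)}$ (which, as literally stated in the paper, additionally requires the renaming $c\mapsto a+c$); you prove (\ref{001}) by the vanishing of the $n$-th finite difference of a polynomial of degree $n-1$ rather than by specializing $c=b$ in (\ref{11}); and you make explicit the polynomial-identity (Zariski-density) argument extending everything from positive real to arbitrary complex parameters, a point the paper leaves implicit.
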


\begin{proof}
(\ref{11}) is nothing else but (\ref{dN1}), which was proved indirectly,
above. (\ref{12}) is (\ref{11}) with $c-b$ replaced by $b$ while $a+b$ is
replaced by $c.$ (\ref{13}) is (\ref{11}) multiplied by $\left(  a+b\right)
^{\left(  n\right)  }$ on both sides. (\ref{I4}) is obtained from (\ref{12})
by the similar multiplication. (\ref{001}) is obtained from (\ref{11}) by
setting $c\allowbreak=\allowbreak b$ and noticing that $(b+k)^{(n-k)}\left(
b\right)  ^{\left(  k\right)  }\allowbreak=\allowbreak\left(  b\right)
^{\left(  n\right)  }$ and can be put outside the sum. Further we notice that
\begin{align*}
\frac{\left(  a+b+n-1\right)  ^{\left(  j\right)  }}{\left(  a+b\right)
^{\left(  j\right)  }}  &  =\frac{\left(  a+b\right)  ^{\left(  n-1+j\right)
}}{\left(  a+b\right)  ^{\left(  n-1\right)  }\left(  a+b\right)  ^{\left(
j\right)  }}\\
&  =\frac{\left(  a+b+j\right)  ^{\left(  n-1\right)  }}{\left(  a+b\right)
^{\left(  n-1\right)  }},
\end{align*}
and we put $1/\left(  a+b\right)  ^{\left(  n-1\right)  }$ outside the sum.
(\ref{002}) is obtained from (\ref{13}) by setting $c=b$.
\end{proof}

\begin{remark}
Again, Tom Koornwinder, in a private communication, noticed that (\ref{11})
can be obtained from the so-called Pfaff-Saalsch\"{u}tz formula \newline(
http://dlmf.nist.gov/16.4.E3) applied to the generalized hypergeometric
function $_{3}F_{2}\left(
\begin{array}
[c]{ccc}%
-n & a+c+n-1 & b\\
c & a+b &
\end{array}
;1\right)  .$ For the definition of hypergeometric function, see the section
\ref{HYP}.
\end{remark}

\section{Hypergeometric function\label{HYP}}

Now, the third object analysed in  the paper is the so-called generalized
hypergeometric function defined by the following infinite series%
\begin{equation}
_{n}F_{m}\left(
\begin{array}
[c]{ccc}%
a_{1} & \ldots & a_{n}\\
b_{1} & \ldots & b_{m}%
\end{array}
;x\right)  =\sum_{j\geq0}\frac{\prod_{k=1}^{n}\left(  a_{k}\right)  ^{\left(
j\right)  }}{\prod_{k=1}^{m}\left(  b_{k}\right)  ^{\left(  j\right)  }}%
\frac{x^{j}}{j!}.\label{GHF}%
\end{equation}
Although one can formally consider this function for all non-negative integers
$n$ and $m,$ we will consider it for all non-negative integers $n\leq m+1.$
This is so because for $n>m+1$ there appear some convergence problems of the
defining series (\ref{GHF}). For details, see \cite{KLS} and \cite{NIST}.
Following \cite{NIST}, 16.2(iii) we will assume that when $n\leq m$ then all
$b_{1},\ldots,b_{m}$ have positive real parts positive reals, when $n=m+1,$
then we will assume that $\operatorname{Re}(\sum_{j=1}^{m}b_{j}-\sum_{j=1}%
^{n}a_{j})>0$ and of course we assume that $\left\vert x\right\vert \leq1$. In
general let us notice that whenever $\prod_{k=1}^{n}a_{k}\allowbreak
=\allowbreak0,$ then $_{n}F_{m}\left(
\begin{array}
[c]{ccc}%
a_{1} & \ldots & a_{n}\\
b_{1} & \ldots & b_{m}%
\end{array}
;x\right)  \allowbreak=\allowbreak1$ for all $x.$ Also when one of the
parameters $a_{1},\ldots,a_{n}$ is equal to a negative integer, say equal to
$-n$, then $_{n}F_{m}\left(
\begin{array}
[c]{ccc}%
a_{1} & \ldots & a_{n}\\
b_{1} & \ldots & b_{m}%
\end{array}
;x\right)  $ is a polynomial in $x$ of order $n$.

Let us remark that the case $n\allowbreak=\allowbreak2$ and $m\allowbreak
=\allowbreak1$ is treated specially. The function $_{2}F_{1}\left(
\begin{array}
[c]{cc}%
a & b\\
c &
\end{array}
;x\right)  $ is called the hypergeometric function, It was studied already by
L. Euler, but its many major properties were obtained by Gauss. It is will be
alternatively denoted as $_{2}F_{1}\left(  a,b;c;x\right)  .$ Hence, we
$_{2}F_{1}$ is the function defined by the following infinite series%
\begin{equation}
_{2}F_{1}\left(
\begin{array}
[c]{cc}%
a & b\\
c &
\end{array}
;x\right)  =~_{2}F_{1}\left(  a,b;c;x\right)  =\sum_{n\geq0}\frac{\left(
a\right)  ^{\left(  n\right)  }\left(  b\right)  ^{\left(  n\right)  }%
}{\left(  c\right)  ^{\left(  n\right)  }}\frac{x^{n}}{n!},\label{Hyp}%
\end{equation}
considered for all complex $\left\vert x\right\vert <1,$ such that $c$ is not
a non-positive integer. If $\operatorname{Re}(c-a-b)>0$ then one extends the
domain of $x$ by adding the unit circle , i.e., when $\left\vert x\right\vert
\allowbreak=\allowbreak1$. To avoid unnecessary complications and also take
into account that we intend to expand the hypergeometric function in the
orthogonal series, we will always assume that parameters $a,$ $b,$ $c$ are
real and such that $c-a-b>0$. After obtaining the orthogonal expansion, we can
extend it from the real segment $[0,1]$ to the unit circle on the complex plane.

Let us also notice that for all positive $m,a_{1},\ldots,a_{m}$ we have%
\[
_{m+1}F_{m}\left(
\begin{array}
[c]{cccc}%
a_{1} & \ldots & a_{m} & b\\
a_{1} & \ldots & a_{m} &
\end{array}
;x\right)  =(1-x)^{-b}.
\]
Hence, we will also assume that $a\neq c\neq b.$

Again, it is common knowledge that one can present Jacobi polynomials as
certain values of hypergeometric function. Namely, we have
\begin{align}
\frac{\left(  b\right)  ^{\left(  n\right)  }}{n!}~_{2}F_{1}\left(
\begin{array}
[c]{cc}%
-n & a+b+n-1\\
b &
\end{array}
;\frac{1-x}{2}\right)   &  =J_{n}(x|a,b),\label{FJ}\\
\frac{\left(  b\right)  ^{\left(  n\right)  }}{n!}~_{2}F_{1}\left(
\begin{array}
[c]{cc}%
-n & a+b+n-1\\
b &
\end{array}
;x\right)   &  =K_{n}(1-x|a,b),\label{FK1}\\
\frac{\left(  a\right)  ^{\left(  n\right)  }}{n!}~_{2}F_{1}\left(
\begin{array}
[c]{cc}%
-n & a+b+n-1\\
a &
\end{array}
;x\right)   &  =(-1)^{n}K_{n}(x|a,b).\label{FK2}%
\end{align}
Again, since our definition of Beta distribution is slightly different from
the ones found in books on special functions, the above identities take a
slightly different form from the usual ones.

Let us start with the following result. $\operatorname{Re}$

\begin{theorem}
\label{GHyp}For any complex $a_{1},\ldots,a_{n},$ $\operatorname{Re}%
(b_{1})>0,\ldots\operatorname{Re}(b_{m})>0$ and $\operatorname{Re}(\sum
_{j=1}^{m}b_{j}-\sum_{j=1}^{n}a_{j})>0$ and $\left\vert x\right\vert \leq1$
and positive reals $f,d$ we have:

i)%
\begin{gather}
_{n}F_{m}\left(
\begin{array}
[c]{ccc}%
a_{1} & \ldots & a_{n}\\
b_{1} & \ldots & b_{m}%
\end{array}
;(1+x)/2\right)  =\label{GHJ1}\\
\sum_{k\geq0}J_{k}(x|f,d)\frac{\prod_{s=1}^{n}\left(  a_{s}\right)  ^{\left(
k\right)  }}{\prod_{s=1}^{m}\left(  b_{s}\right)  ^{\left(  k\right)  }\left(
d+f+k-1\right)  ^{\left(  k\right)  }}\nonumber\\
\times~_{n+1}F_{m+1}\left(
\begin{array}
[c]{cccc}%
a_{1}+k & \ldots & a_{n}+k & f+k\\
b_{1}+k & \ldots & b_{m}+k & \left(  d+f+2k\right)
\end{array}
;1\right) \nonumber
\end{gather}

ii)
\begin{gather}
_{n}F_{m}\left(
\begin{array}
[c]{ccc}%
a_{1} & \ldots & a_{n}\\
b_{1} & \ldots & b_{m}%
\end{array}
;x\right)  =\label{GHK1}\\
\sum_{j\geq0}K_{j}(x|f,d)\frac{\prod_{s=1}^{n}\left(  a_{s}\right)  ^{\left(
j\right)  }}{\prod_{s=1}^{m}\left(  b_{s}\right)  ^{\left(  j\right)  }\left(
d+f+j-1\right)  ^{\left(  j\right)  }}\nonumber\\
\times~_{n+1}F_{m+1}\left(
\begin{array}
[c]{cccc}%
a_{1}+j & \ldots & a_{n}+j & f+j\\
b_{1}+j & \ldots & b_{m}+j & \left(  d+f+2j\right)
\end{array}
;1\right)  .\nonumber
\end{gather}

iii) Assume additionally that $b_{1}\allowbreak\neq$\allowbreak$1,$ then
\begin{gather}
_{n}F_{m}\left(
\begin{array}
[c]{ccc}%
a_{1} & \ldots & a_{n}\\
b_{1} & \ldots & b_{m}%
\end{array}
;x\right)  =\label{GHS1}\\
(b_{1}-1)\sum_{k\geq0}\frac{(-1)^{k}}{b_{1}-1+k}\frac{x^{k}}{k!}\frac
{\prod_{s=1}^{n}\left(  a_{s}\right)  ^{\left(  k\right)  }}{k!\prod_{s=2}%
^{m}\left(  b_{s}\right)  ^{\left(  k\right)  }}~_{n}F_{m}\left(
\begin{array}
[c]{ccc}%
a_{1}+k & \ldots & a_{n}+k\\
1+k & \ldots & b_{m}+k
\end{array}
;x\right) \nonumber
\end{gather}

\end{theorem}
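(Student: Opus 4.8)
The plan is to take (ii) as the core result, deduce (i) from it by an affine change of variable, and establish (iii) independently by a direct rearrangement of series. As a preliminary I would record that the hypotheses $n\le m+1$ and $\operatorname{Re}(\sum_j b_j-\sum_j a_j)>0$ force $\operatorname{Re}\bigl(\sum_s b_s-\sum_s a_s+d+(m-n+1)j\bigr)>0$ for every $j\ge 0$, which is exactly the balancing condition guaranteeing that each ${}_{n+1}F_{m+1}(\dots;1)$ on the right converges and that the term-by-term integrations below are legitimate.

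For (ii) I would use that $\{K_j(\cdot|f,d)\}$ is orthogonal with respect to the density $f(\cdot|f,d)$, so the coefficient of $K_j$ in the expansion of ${}_nF_m$ is $c_j=s_j^{-1}\int_0^1 {}_nF_m(\dots;x)\,K_j(x|f,d)\,f(x|f,d)\,dx$, where $s_j$ is the squared norm furnished by (\ref{SQR}). Writing ${}_nF_m$ as its defining series $\sum_{i\ge 0}A_i x^i$ with $A_i=\prod_s (a_s)^{(i)}/\bigl(\prod_s (b_s)^{(i)}\,i!\bigr)$ and interchanging sum and integral, everything reduces to the single moment $\int_0^1 x^i K_j(x|f,d)f(x|f,d)\,dx$. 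Inserting the explicit form (\ref{kex1}), $K_j=\sum_m e_{j,m}(f,d)(x-1)^m$, and using the Beta integral $\int_0^1 x^{i+f-1}(1-x)^{m+d-1}dx=B(i+f,m+d)$ together with $(d+m)^{(j-m)}(d)^{(m)}=(d)^{(j)}$, this moment becomes $\tfrac{(f)^{(i)}}{(f+d)^{(i)}}\cdot\tfrac{(d)^{(j)}}{j!}\sum_m\binom{j}{m}(-1)^m\tfrac{(f+d+j-1)^{(m)}}{(f+d+i)^{(m)}}$. The inner sum is precisely the Chu--Vandermonde evaluation (\ref{Hyp2}), equal to $(i-j+1)^{(j)}/(f+d+i)^{(j)}=\bigl(i!/(i-j)!\bigr)/(f+d+i)^{(j)}$, and it vanishes for $i<j$ as orthogonality demands.

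Substituting this moment back into $c_j s_j$, the factor $i!$ cancels; re-indexing $i=j+r$ and splitting every Pochhammer factor through $(a)^{(j+r)}=(a)^{(j)}(a+j)^{(r)}$ turns the $r$-sum into exactly the series ${}_{n+1}F_{m+1}(\dots;1)$ of the statement, once one also collapses $(f+d)^{(j+r)}(f+d+j+r)^{(j)}=(f+d)^{(2j+r)}=(f+d)^{(2j)}(f+d+2j)^{(r)}$. Dividing by $s_j$ from (\ref{SQR}) and using the telescoping identity $(f+d)^{(2j)}=(f+d)^{(j-1)}(f+d+j-1)^{(j)}(f+d+2j-1)$ leaves precisely the coefficient $\prod_s (a_s)^{(j)}/\bigl(\prod_s(b_s)^{(j)}(d+f+j-1)^{(j)}\bigr)$, proving (ii). Part (i) then follows immediately: replacing $x$ by $(1+x)/2$ in (ii) and invoking (\ref{O1+}) in the form $K_j((1+x)/2|f,d)=J_j(x|f,d)$ converts the $K$-expansion on $[0,1]$ into the $J$-expansion on $[-1,1]$ with identical coefficients.

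For (iii) I would argue from the right-hand side. Expanding the inner ${}_nF_m(\dots;x)$ as $\sum_r(\cdots)x^r/r!$, merging the two sums via $(a_s)^{(k)}(a_s+k)^{(r)}=(a_s)^{(k+r)}$ and $(1+k)^{(r)}=(k+r)!/k!$, and collecting the coefficient of $x^i$ after the substitution $i=k+r$, the whole expression becomes $\sum_i \tfrac{x^i}{(i!)^2}\tfrac{\prod_s(a_s)^{(i)}}{\prod_{s\ge 2}(b_s)^{(i)}}\,(b_1-1)\sum_{k=0}^i\binom{i}{k}\tfrac{(-1)^k}{b_1-1+k}$. The inner $k$-sum is identity (\ref{20}) with $x\mapsto b_1-1$ and $n\mapsto i+1$, giving $i!/(b_1-1)^{(i+1)}=i!/\bigl((b_1-1)(b_1)^{(i)}\bigr)$; multiplying by $(b_1-1)$ restores the missing factor $1/(b_1)^{(i)}$ and recovers the defining series of ${}_nF_m(\dots;x)$. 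The main obstacle throughout is the Pochhammer bookkeeping in (ii)---correctly evaluating the moment, spotting the Chu--Vandermonde collapse, and performing the final norm cancellation so that the coefficient matches the stated form exactly; parts (i) and (iii) are then comparatively routine.
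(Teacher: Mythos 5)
Your proof is correct, but for parts (i) and (ii) it takes a genuinely different route from the paper's. The paper never integrates anything: it writes $_{n}F_{m}$ as its defining power series, replaces each power $(x-1)^{j}$ (resp. $(1-x)^{j}/2^{j}$) by its expansion in Jacobi polynomials via the inversion formulas (\ref{kex2}) and (\ref{fex2}) with the coefficients $\tilde{e}_{j,k}$ of (\ref{dnj}), and interchanges the two sums, recognizing the inner sum as $_{n+1}F_{m+1}(\dots;1)$. You instead treat (ii) as a Fourier--Jacobi expansion: compute the coefficients by orthogonality, reduce to the moments $\int_{0}^{1}x^{i}K_{j}(x|f,d)f(x|f,d)\,dx$, evaluate these via the Beta integral and the Chu--Vandermonde identity (\ref{Hyp2}), and divide by the norm (\ref{SQR}); your Pochhammer bookkeeping checks out and reproduces exactly the stated coefficient. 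Your route has two advantages: the vanishing of the moment for $i<j$ drops out of Chu--Vandermonde as a built-in consistency check, and part (i) follows from part (ii) in one line via $K_{j}((1+x)/2|f,d)=J_{j}(x|f,d)$, i.e. (\ref{O1+}), whereas the paper reruns the entire computation with $J_{k}$ and the reflection (\ref{O1}). The cost is analytic: to conclude that the function \emph{equals} its orthogonal expansion pointwise (rather than merely identifying the correct $L^{2}$-coefficients) you implicitly need completeness of $\{K_{j}\}$ in $L^{2}$ of the Beta measure and a pointwise convergence argument on $[0,1]$, followed by analytic continuation to complex $|x|\leq1$; the paper's rearrangement argument needs only absolute convergence to justify the interchange of summations and yields the pointwise identity directly. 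Your part (iii) is essentially the paper's own argument run in reverse (both hinge on (\ref{20}) applied to $1/(b_{1})^{(i)}$ together with an interchange of summation), so there the two proofs coincide.
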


\begin{proof}
i) We will utilize (\ref{fex2}), (\ref{dnj}) and (\ref{O1}). We get then
\begin{gather*}
_{n}F_{m}\left(
\begin{array}
[c]{ccc}%
a_{1} & \ldots & a_{n}\\
b_{1} & \ldots & b_{m}%
\end{array}
;(1-x)/2\right)  =\sum_{j\geq0}\frac{\prod_{s=1}^{n}\left(  a_{s}\right)
^{\left(  j\right)  }}{\prod_{s=1}^{m}\left(  b_{s}\right)  ^{\left(
j\right)  }}\frac{(1-x)^{j}}{j!2^{j}}\\
=\sum_{j\geq0}(-1)^{j}\frac{\prod_{s=1}^{n}\left(  a_{s}\right)  ^{\left(
j\right)  }}{\prod_{s=1}^{m}\left(  b_{s}\right)  ^{\left(  j\right)  }}%
\frac{1}{j!}\sum_{k=0}^{j}\tilde{e}_{j,k}(d,f)J_{k}(x|d,f)\\
=\sum_{k\geq0}\left(  -1\right)  ^{k}J_{k}(x|d,f)\sum_{j\geq k}(-1\sum
)^{j-k}\frac{\prod_{s=1}^{n}\left(  a_{s}\right)  ^{\left(  j\right)  }}%
{\prod_{s=1}^{m}\left(  b_{s}\right)  ^{\left(  j\right)  }}\\
\times\frac{1}{j!}\frac{(-1)^{j-k}j!\left(  f+k\right)  ^{(j-k)}}{\left(
j-k\right)  !\left(  d+f+k-1\right)  ^{\left(  k\right)  }\left(
d+f+2k\right)  ^{\left(  j-k\right)  }}\\
=\sum_{k\geq0}J_{k}(-x|f,d)\frac{\prod_{s=1}^{n}\left(  a_{s}\right)
^{\left(  k\right)  }}{\prod_{s=1}^{m}\left(  b_{s}\right)  ^{\left(
k\right)  }\left(  d+f+k-1\right)  ^{\left(  k\right)  }}\\
\times\sum_{j\geq k}\frac{\prod_{s=1}^{n}\left(  a_{s}+k\right)  ^{\left(
j-k\right)  }}{\prod_{s=1}^{m}\left(  b_{s}+k\right)  ^{\left(  j-k\right)  }%
}\frac{\left(  f+k\right)  ^{(j-k)}}{\left(  j-k\right)  !\left(
d+f+2k\right)  ^{\left(  j-k\right)  }}\\
=\sum_{k\geq0}J_{k}(-x|f,d)\frac{\prod_{s=1}^{n}\left(  a_{s}\right)
^{\left(  k\right)  }}{\prod_{s=1}^{m}\left(  b_{s}\right)  ^{\left(
k\right)  }\left(  d+f+k-1\right)  ^{\left(  k\right)  }}\\
\times~_{n+1}F_{m+1}\left(
\begin{array}
[c]{cccc}%
a_{1+}+k & \ldots & a_{n}+k & f+k\\
b_{1}+k & \ldots & b_{m}+k & \left(  d+f+2k\right)
\end{array}
;1\right)
\end{gather*}

ii) We proceed in the similar way by this time we will use polynomials $K_{j}%
$, hence we will use (\ref{kex2}) and (\ref{O2}) and in the last stage we
change $1-x$ to $x$.%

\begin{gather*}
_{n}F_{m}\left(
\begin{array}
[c]{ccc}%
a_{1} & \ldots & a_{n}\\
b_{1} & \ldots & b_{m}%
\end{array}
;(1-x)\right)  =\sum_{k\geq0}(-1)^{k}\frac{\prod_{s=1}^{n}\left(
a_{s}\right)  ^{\left(  k\right)  }}{\prod_{s=1}^{m}\left(  b_{s}\right)
^{\left(  k\right)  }}\frac{(x-1)^{k}}{k!}\\
=\sum_{k\geq0}(-1)^{k}\frac{\prod_{s=1}^{n}\left(  a_{s}\right)  ^{\left(
k\right)  }}{\prod_{s=1}^{m}\left(  b_{s}\right)  ^{\left(  k\right)  }}%
\frac{1}{k!}\sum_{j=0}^{k}\tilde{e}_{k,j}(d,f)K_{j}(x|d,f)\\
=\sum_{j\geq0}(-1)^{j}K_{j}(x|d,f)\sum_{k\geq j}\frac{(-1)^{k-j}}{k!}\\
\times\frac{\prod_{s=1}^{n}\left(  a_{s}\right)  ^{\left(  k\right)  }}%
{\prod_{s=1}^{m}\left(  b_{s}\right)  ^{\left(  k\right)  }}\frac
{(-1)^{k-j}k!\left(  b+j\right)  ^{\left(  k-m\right)  }}{(k-j)!\left(
a+b+m-1\right)  ^{\left(  j\right)  }\left(  a+b+2m\right)  ^{\left(
k-j\right)  }}.
\end{gather*}

iii) We use (\ref{20}) and get
\begin{gather*}
_{n}F_{m}\left(
\begin{array}
[c]{ccc}%
a_{1} & \ldots & a_{n}\\
b_{1} & \ldots & b_{m}%
\end{array}
;x\right)  =(b_{1}-1)\sum_{j\geq0}\frac{\prod_{s=1}^{n}\left(  a_{s}\right)
^{\left(  j\right)  }}{\prod_{s=2}^{m}\left(  b_{s}\right)  ^{\left(
j\right)  }}\frac{x^{j}}{j!}\frac{1}{j!}\sum_{k=0}^{j}\binom{j}{k}%
\frac{(-1)^{k}}{b_{1}-1+k}\\
=(b_{1}-1)\sum_{k\geq0}\frac{(-1)^{k}}{b_{1}-1+k}\frac{x^{k}}{k!}\sum_{j\geq
k}\frac{x^{j-k}}{(j-k)!}\frac{\prod_{s=1}^{n}\left(  a_{s}\right)  ^{\left(
j\right)  }}{(1)^{\left(  j\right)  }\prod_{s=2}^{m}\left(  b_{s}\right)
^{\left(  j\right)  }}\\
=(b_{1}-1)\sum_{k\geq0}\frac{(-1)^{k}}{b_{1}-1+k}\frac{x^{k}}{k!}\frac
{\prod_{s=1}^{n}\left(  a_{s}\right)  ^{\left(  k\right)  }}{\prod_{s=2}%
^{m}\left(  b_{s}\right)  ^{\left(  k\right)  }}\sum_{j\geq k}\frac{x^{j-k}%
}{(j-k)!}\frac{\prod_{s=1}^{n}\left(  a_{s}+k\right)  ^{\left(  j-k\right)  }%
}{(k+1)^{\left(  j-k\right)  }\prod_{s=2}^{m}\left(  b_{s}+k\right)  ^{\left(
j-k\right)  }}\\
=(b_{1}-1)\sum_{k\geq0}\frac{(-1)^{k}}{b_{1}-1+k}\frac{x^{k}}{k!}\frac
{\prod_{s=1}^{n}\left(  a_{s}\right)  ^{\left(  k\right)  }}{k!\prod_{s=2}%
^{m}\left(  b_{s}\right)  ^{\left(  k\right)  }}~_{n}F_{m}\left(
\begin{array}
[c]{ccc}%
a_{1}+k & \ldots & a_{n}+k\\
1+k & \ldots & b_{m}+k
\end{array}
;x\right)
\end{gather*}

\end{proof}

\begin{corollary}
[Hypergeometric case]1) For all complex $x,$ $a,$ $b,$ $c$ , and real $d$ ,
$f$ such that $\operatorname{Re}(c-a-b)>0,$ $\left\vert x\right\vert \leq1,$
$d>0,$ $f>0,$ we have the following expansions of the hypergeometric function
in orthogonal series%
\begin{gather}
_{2}F_{1}\left(  a,b;c;\frac{\left(  1+x\right)  }{2}\right)  =\sum_{j\geq
0}J_{j}\left(  x|f,d\right)  \frac{\left(  a\right)  ^{\left(  j\right)
}\left(  b\right)  ^{\left(  j\right)  }}{\left(  c\right)  ^{\left(
j\right)  }\left(  d+f+j-1\right)  ^{\left(  j\right)  }}\times\label{2F1-1}\\
_{3}F_{2}\left(
\begin{array}
[c]{ccc}%
a+j & b+j & f+j\\
c+j & d+f+2j &
\end{array}
;1\right)  =\frac{\Gamma(d+f)\Gamma(c+d-a-b)}{\Gamma(d+f-a)\Gamma
(c+d-b)}\times\nonumber\\
\sum_{j\geq0}J_{j}\left(  x|f,d\right)  \frac{\left(  a\right)  ^{\left(
j\right)  }\left(  b\right)  ^{\left(  j\right)  }\left(  d+f\right)
^{\left(  2j\right)  }}{\left(  c\right)  ^{\left(  j\right)  }\left(
d+f+j-1\right)  ^{\left(  j\right)  }\left(  d+f-a\right)  ^{\left(  j\right)
}\left(  c+d-b\right)  ^{\left(  j\right)  }}\nonumber\\
\times~_{3}F_{2}\left(
\begin{array}
[c]{ccc}%
a+j & c-b & c-f\\
c+j & c+d-b+j &
\end{array}
;1\right)  .\nonumber
\end{gather}

\begin{gather}
_{2}F_{1}\left(  a,b;c;x\right)  =\sum_{j\geq0}K_{j}\left(  x|f,d\right)
\frac{\left(  a\right)  ^{\left(  j\right)  }\left(  b\right)  ^{\left(
j\right)  }}{\left(  c\right)  ^{\left(  j\right)  }\left(  d+f+j-1\right)
^{\left(  j\right)  }}\label{2F1-2}\\
\times~_{3}F_{2}\left(
\begin{array}
[c]{ccc}%
a+j & b+j & f+j\\
c+j & d+f+2j &
\end{array}
;1\right)  =\frac{\Gamma(d+f)\Gamma(c+d-a-b)}{\Gamma(d+f-a)\Gamma
(c+d-b)}\nonumber\\
\sum_{j\geq0}K_{j}\left(  x|f,d\right)  \frac{\left(  a\right)  ^{\left(
j\right)  }\left(  b\right)  ^{\left(  j\right)  }\left(  d+f\right)
^{\left(  2j\right)  }}{\left(  c\right)  ^{\left(  j\right)  }\left(
d+f+j-1\right)  ^{\left(  j\right)  }\left(  d+f-a\right)  ^{\left(  j\right)
}\left(  c+d-b\right)  ^{\left(  j\right)  }}\nonumber\\
\times~_{3}F_{2}\left(
\begin{array}
[c]{ccc}%
a+j & c-b & c-f\\
c+j & c+d-b+j &
\end{array}
;1\right)  .\nonumber
\end{gather}

2) If $c\neq1,$ $\operatorname{Re}(c-a-b)>0$ and $\left\vert x\right\vert <1,$
then we have also:%
\begin{gather}
_{2}F_{1}\left(  a,b;c;x\right)  =\left(  c-1\right)  \sum_{j\geq0}%
\frac{(-1)^{j}\left(  a\right)  ^{\left(  j\right)  }\left(  b\right)
^{\left(  j\right)  }x^{j}}{\left(  c-1+j\right)  \left(  j!\right)  ^{2}%
}~_{2}F_{1}\left(  a+j,b+j;j+1;x\right) \label{2F1-3}\\
=(1-x)^{1-a-b}\left(  c-1\right) \nonumber\\
\times\sum_{j\geq0}\frac{(-1)^{j}\left(  a\right)  ^{\left(  j\right)
}\left(  b\right)  ^{\left(  j\right)  }x^{j}}{\left(  c-1+j\right)  \left(
j!\right)  ^{2}(1-x)^{j}}~_{2}F_{1}\left(  1-a,1-b;j+1;x\right)  .\nonumber
\end{gather}

\end{corollary}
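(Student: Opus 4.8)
The first equalities in (\ref{2F1-1}) and (\ref{2F1-2}) require no work beyond substitution: I set $n=2$, $m=1$, $(a_{1},a_{2})=(a,b)$ and $b_{1}=c$ in parts i) and ii) of Theorem \ref{GHyp}. The inner ${}_{n+1}F_{m+1}$ then collapses to ${}_{3}F_{2}(a+j,b+j,f+j;c+j,d+f+2j;1)$, and the running hypotheses $\operatorname{Re}(c-a-b)>0$, $d>0$, $f>0$ give $\operatorname{Re}(\sum b_{s}-\sum a_{s})=c+d-a-b>0$, so Theorem \ref{GHyp} applies and each inner ${}_{3}F_{2}(\cdot;1)$ converges. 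In the same way, the first equality of (\ref{2F1-3}) is part iii) of Theorem \ref{GHyp} under the identical substitution: the product $\prod_{s=2}^{m}$ is empty, the inner series is ${}_{2}F_{1}(a+k,b+k;1+k;x)$, and the two factorials $k!\,k!$ reproduce the stated $(j!)^{2}$; the condition $c\neq 1$ is exactly $b_{1}\neq 1$.

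The real content is the second equalities of (\ref{2F1-1}) and (\ref{2F1-2}). Both of their sides are expansions in the \emph{same} orthogonal family ($\{J_{j}(\cdot|f,d)\}$, respectively $\{K_{j}(\cdot|f,d)\}$), so by uniqueness of the coefficients it is enough to match them term by term; concretely I must prove the scalar identity
\[
{}_{3}F_{2}(a+j,b+j,f+j;c+j,d+f+2j;1)=\frac{\Gamma(d+f)\Gamma(c+d-a-b)}{\Gamma(d+f-a)\Gamma(c+d-b)}\,\frac{(d+f)^{(2j)}}{(d+f-a)^{(j)}(c+d-b)^{(j)}}\,{}_{3}F_{2}(a+j,c-b,c-f;c+j,c+d-b+j;1).
\]
My plan is to write the left-hand ${}_{3}F_{2}$ as a single Euler (Beta) integral, integrating out the parameter $f+j$ against $d+f+2j$,
\[
{}_{3}F_{2}(a+j,b+j,f+j;c+j,d+f+2j;1)=\frac{\Gamma(d+f+2j)}{\Gamma(f+j)\Gamma(d+j)}\int_{0}^{1}t^{f+j-1}(1-t)^{d+j-1}\,{}_{2}F_{1}(a+j,b+j;c+j;t)\,dt,
\]
which is legitimate since $d+j>0$ and $f+j>0$. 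I then apply a classical transformation (Euler's $(1-t)^{c-a-b}$ transformation, if needed followed by a Pfaff transformation) to the embedded ${}_{2}F_{1}$ and read the integral back as a ${}_{3}F_{2}(\cdot;1)$ in the target parameters.

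The Gamma prefactor is exactly the fingerprint I expect from this route: at the stage where the embedded hypergeometric is evaluated at argument $1$, Gauss's theorem (the closed form recalled near (\ref{Hyp2})) contributes the $j$-independent constant, and a short check shows
\[
{}_{2}F_{1}(a,\,b+f-c;\,d+f;1)=\frac{\Gamma(d+f)\Gamma(c+d-a-b)}{\Gamma(d+f-a)\Gamma(c+d-b)},
\]
which is precisely the stated prefactor; the remaining $j$-dependent Gamma quotients reassemble, via (\ref{Gn}), into the Pochhammer ratio $(d+f)^{(2j)}/[(d+f-a)^{(j)}(c+d-b)^{(j)}]$. Equivalently, and this is the cleanest way to pin the constants, I can recompute the Fourier--Jacobi coefficient directly: it equals $h_{j}^{-1}\int_{0}^{1}{}_{2}F_{1}(a,b;c;x)K_{j}(x|f,d)f(x|f,d)\,dx$ with $h_{j}$ given by (\ref{SQR}), and Euler's transformation ${}_{2}F_{1}(a,b;c;x)=(1-x)^{c-a-b}\,{}_{2}F_{1}(c-a,c-b;c;x)$ absorbs the factor $(1-x)^{c-a-b}$ into the weight $x^{f-1}(1-x)^{d-1}$, turning it into a Beta$(f,c+d-a-b)$ weight. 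The resulting integral is then evaluated using the Beta integrals of the Proposition (part ii)) together with the Rodrigues representation of $K_{j}$; the transformed parameters $c-a,c-b$ and the shifted weight parameter $c+d-a-b$ are what force the appearance of $c-b$, $c-f$, $c+d-b$, and $d+f-a$ in the final form.

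The step I expect to be the main obstacle is exactly this parameter bookkeeping. Note that the two ${}_{3}F_{2}$'s above have \emph{different} parameter excess ($c+d-a-b$ on the left, $d+f-a+j$ on the right), so the identity is not a single Thomae relation but a genuinely excess-changing transformation; one must therefore select the correct combination of Euler/Pfaff transformations of the embedded ${}_{2}F_{1}$ and then factor the accumulated $j$-dependent Gamma functions exactly as the product of the Gauss constant and the displayed Pochhammer ratio, keeping $\operatorname{Re}(c+d-a-b)>0$ throughout so that every ${}_{3}F_{2}(\cdot;1)$ converges and every integral representation is valid. By contrast, the second equality of (\ref{2F1-3}) is routine: Euler's transformation applied to the inner ${}_{2}F_{1}(a+j,b+j;j+1;x)$ gives $(1-x)^{1-a-b-j}\,{}_{2}F_{1}(1-a,1-b;j+1;x)$, and pulling the $j$-free factor $(1-x)^{1-a-b}$ outside the sum while leaving $(1-x)^{-j}$ inside reproduces the claimed expression for $|x|<1$.
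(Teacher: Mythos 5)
Your route is essentially the paper's. The paper likewise obtains the first equalities of (\ref{2F1-1}), (\ref{2F1-2}) and of (\ref{2F1-3}) by specializing Theorem \ref{GHyp} to $n=2$, $m=1$, $(a_{1},a_{2},b_{1})=(a,b,c)$, gets the last equality of (\ref{2F1-3}) by Euler's transformation, and produces the second equalities of (\ref{2F1-1})--(\ref{2F1-2}) by transforming the inner ${}_{3}F_{2}(\cdot;1)$ term by term; the only difference is that where you re-derive the needed two-term relation, the paper simply cites (16.4.11) of \cite{NIST} together with (\ref{Gn}). The scalar identity you isolate is exactly that citation, and your Beta-integral derivation is the standard proof of it, so nothing essential is missing; note also that your appeal to uniqueness of orthogonal-expansion coefficients is dispensable, since the transformation is applied to each coefficient individually, not to the series as a whole.

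One concrete correction to your sketch: with the pairing you chose (integrating out $f+j$ against $d+f+2j$, so that the embedded function is ${}_{2}F_{1}(a+j,b+j;c+j;t)$), a single Euler transformation gives $(1-t)^{c-a-b-j}\,{}_{2}F_{1}(c-a,c-b;c+j;t)$ and hence the variant ${}_{3}F_{2}(f+j,c-a,c-b;c+j,c+d+f-a-b+j;1)$ -- this is the paper's (\ref{2F1-1d}) and (\ref{2F1-2d}) from the subsequent Remark, not the displayed right-hand side, and the Pfaff transformation you reach for would take you out of the Beta-integral framework (it moves the argument to $t/(t-1)$). To land on the stated form in one step, integrate out $a+j$ against $d+f+2j$:
\begin{align*}
&{}_{3}F_{2}\left(  a+j,b+j,f+j;c+j,d+f+2j;1\right) \\
&\quad=\frac{\Gamma(d+f+2j)}{\Gamma(a+j)\Gamma(d+f-a+j)}\int_{0}^{1}
t^{a+j-1}(1-t)^{d+f-a+j-1}\,{}_{2}F_{1}\left(  b+j,f+j;c+j;t\right)  dt,
\end{align*}
valid for $\operatorname{Re}(a+j)>0$ and extended to the remaining $a$ by analytic continuation. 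Euler's transformation ${}_{2}F_{1}(b+j,f+j;c+j;t)=(1-t)^{c-b-f}\,{}_{2}F_{1}(c-b,c-f;c+j;t)$ then makes the $(1-t)$-exponent equal to $c+d-a-b+j-1$, and reading the integral back yields
\[
\frac{\Gamma(d+f+2j)\Gamma(c+d-a-b)}{\Gamma(d+f-a+j)\Gamma(c+d-b+j)}
\,{}_{3}F_{2}\left(  a+j,c-b,c-f;c+j,c+d-b+j;1\right)  ,
\]
after which (\ref{Gn}) splits the Gamma functions into the $j$-free prefactor and the Pochhammer ratio exactly as displayed; no second transformation is needed. Your Gauss-sum verification of the prefactor is correct, and your treatment of the second equality of (\ref{2F1-3}) coincides with the paper's.
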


\begin{proof}
We apply assertions of the Theorem \ref{GHyp} for $n\allowbreak=\allowbreak2$
and $m\allowbreak=\allowbreak1$ with $a_{1}\allowbreak=\allowbreak a,$
$a_{2}\allowbreak=\allowbreak b,$ $b_{1}\allowbreak=\allowbreak c.$ Now to
transform $~_{3}F_{2}\left(
\begin{array}
[c]{ccc}%
a+j & b+j & f+j\\
c+j & d+f+2j &
\end{array}
;1\right)  $ we apply (16.4.11) of \cite{NIST} and (\ref{Gn}).

2) To get the last equality, we applied the well-known Euler transformation.
\end{proof}

\begin{remark}
Let us notice that we can use identity (16.4.11) of \cite{NIST} concerning
$_{3}F_{2}\left(
\begin{array}
[c]{ccc}%
a & b & c\\
d & f &
\end{array}
;1\right)  $ in another way and get another forms of the basic expansions
(\ref{2F1-1}) and (\ref{2F1-2}). We get then%
\begin{gather}
_{2}F_{1}\left(  a,b;c;\frac{\left(  1+x\right)  }{2}\right)  =\frac
{\Gamma(c)\Gamma(c+d-a-b)}{\Gamma(c-a)\Gamma(c+d-b)}\sum_{j\geq0}J_{j}\left(
x|f,d\right) \label{2F1-1a}\\
\times\frac{\left(  a\right)  ^{\left(  j\right)  }\left(  b\right)  ^{\left(
j\right)  }}{\left(  d+f+j-1\right)  ^{\left(  j\right)  }(c+d-b)^{\left(
j\right)  }}\times\\
~_{3}F_{2}\left(
\begin{array}
[c]{ccc}%
a+j & d+f-b+j & d+j\\
d+c-b+j & d+f+2j &
\end{array}
;1\right) \nonumber\\
=\frac{\Gamma(c)\Gamma(c+d-a-b)}{\Gamma(c-f)\Gamma(c+d+f-a-b)}\sum_{j\geq
0}J_{j}\left(  x|f,d\right) \label{2F1-1b}\\
\times\frac{\left(  a\right)  ^{\left(  j\right)  }\left(  b\right)  ^{\left(
j\right)  }}{\left(  d+f+j-1\right)  ^{\left(  j\right)  }(c+d+f-a-b)^{\left(
j\right)  }}\\
\times~_{3}F_{2}\left(
\begin{array}
[c]{ccc}%
f+j & d+f-b+j & d+f-a+j\\
c+d+f-a-b+j & d+f+2j &
\end{array}
;1\right) \nonumber
\end{gather}%
\begin{gather}
=\frac{\Gamma(d+f)\Gamma(c+d-a-b)}{\Gamma(d)\Gamma(c+d+f-a-b)}\sum_{j\geq
0}J_{j}\left(  x|f,d\right)  \frac{\left(  a\right)  ^{\left(  j\right)
}\left(  b\right)  ^{\left(  j\right)  }}{\left(  c\right)  ^{\left(
j\right)  }\left(  d+f+j-1\right)  ^{\left(  j\right)  }}\times\label{2F1-1d}%
\\
\frac{(d+f)^{\left(  2j\right)  }}{(d)^{\left(  j\right)  }%
(c+d+f-a-b)^{\left(  j\right)  }}~_{3}F_{2}\left(
\begin{array}
[c]{ccc}%
f+j & c-a & c-b\\
c+j & c+d+f-a-b+j &
\end{array}
;1\right) \nonumber
\end{gather}

and
\begin{gather}
_{2}F_{1}\left(  a,b;c;x\right)  =\frac{\Gamma(c)\Gamma(c+d-a-b)}%
{\Gamma(c-a)\Gamma(c+d-b)}\sum_{j\geq0}K_{j}\left(  x|f,d\right)
\times\label{2F1-2a}\\
\frac{\left(  a\right)  ^{\left(  j\right)  }\left(  b\right)  ^{\left(
j\right)  }}{\left(  d+f+j-1\right)  ^{\left(  j\right)  }(c+d-b)^{\left(
j\right)  }}~_{3}F_{2}\left(
\begin{array}
[c]{ccc}%
a+j & d+f-b+j & d+j\\
d+c-b+j & d+f+2j &
\end{array}
;1\right)  =\nonumber
\end{gather}

\begin{gather}
=\frac{\Gamma(c)\Gamma(c+d-a-b)}{\Gamma(c-f)\Gamma(c+d+f-a-b)}\sum_{j\geq
0}K_{j}\left(  x|f,d\right)  \label{2F1-2b}\\
\times\frac{\left(  a\right)  ^{\left(  j\right)  }\left(  b\right)  ^{\left(
j\right)  }}{\left(  d+f+j-1\right)  ^{\left(  j\right)  }(c+d+f-a-b)^{\left(
j\right)  }}\label{2F1-2c}\\
\times~_{3}F_{2}\left(
\begin{array}
[c]{ccc}%
f+j & d+f-b+j & d+f-a+j\\
c+d+f-a-b+j & d+f+2j &
\end{array}
;1\right)  .\nonumber
\end{gather}

\begin{gather}
=\frac{\Gamma(d+f)\Gamma(c+d-a-b)}{\Gamma(d)\Gamma(c+d+f-a-b)}\sum_{j\geq
0}K_{j}\left(  x|f,d\right)  \frac{\left(  a\right)  ^{\left(  j\right)
}\left(  b\right)  ^{\left(  j\right)  }}{\left(  c\right)  ^{\left(
j\right)  }\left(  d+f+j-1\right)  ^{\left(  j\right)  }}\times\label{2F1-2d}%
\\
\frac{(d+f)^{\left(  2j\right)  }}{(d)^{\left(  j\right)  }%
(c+d+f-a-b)^{\left(  j\right)  }}~_{3}F_{2}\left(
\begin{array}
[c]{ccc}%
f+j & c-a & c-b\\
c+j & c+d+f-a-b+j &
\end{array}
;1\right)  .\nonumber
\end{gather}

\end{remark}

\begin{proof}
We get the following four equivalent expressions
\begin{align*}
&  _{3}F_{2}\left(
\begin{array}
[c]{ccc}%
a+j & b+j & f+j\\
c+j & d+f+2j &
\end{array}
;1\right) \\
&  =\frac{\Gamma(c+j)\Gamma(c+d-a-b)}{\Gamma(c-a)\Gamma(c+d-b+j)}~_{3}%
F_{2}\left(
\begin{array}
[c]{ccc}%
a+j & d+f-b+j & d+j\\
d+c-b+j & d+f+2j &
\end{array}
;1\right) \\
&  =\frac{\Gamma(c+j)\Gamma(c+d-a-b)}{\Gamma(c-f)\Gamma(c+d+f-a-b+j)}\times\\
&  ~_{3}F_{2}\left(
\begin{array}
[c]{ccc}%
f+j & d+f-b+j & d+f-a+j\\
c+d+f-a-b+j & d+f+2j &
\end{array}
;1\right) \\
&  =\frac{\Gamma(d+f+2j)\Gamma(c+d-a-b)}{\Gamma(d+f-a+j)\Gamma(d+c-b+j)}%
~_{3}F_{2}\left(
\begin{array}
[c]{ccc}%
a+j & c-b & c-f\\
c+j & d+c-b+j &
\end{array}
;1\right) \\
&  =\frac{\Gamma(d+f+2j)\Gamma(c+d-a-b)}{\Gamma(d+j)\Gamma(c+d+f-a-b+j)}\\
&  \times~_{3}F_{2}\left(
\begin{array}
[c]{ccc}%
f+j & c-a & c-b\\
c+j & c+d+f-a-b+j &
\end{array}
;1\right)
\end{align*}

Now we apply (\ref{Gn}) to get the desired identities. We proceed in a similar
way with (\ref{2F1-2}).
\end{proof}

\begin{lemma}
\label{part}a) By setting $f\allowbreak=\allowbreak c$ and assuming that
$c+d\allowbreak\neq\allowbreak1,$ we can obtain%
\begin{gather}
_{2}F_{1}\left(  a,b;c;x\right)  =\frac{\Gamma(c+d-1)\Gamma\left(
c+d-a-b\right)  }{\Gamma(c+d-a)\Gamma\left(  c+d-b\right)  }\label{2F1-2*}\\
\times\sum_{j\geq0}K_{j}\left(  x|c,d\right)  \frac{\left(  a\right)
^{\left(  j\right)  }\left(  b\right)  ^{\left(  j\right)  }\left(
c+d-1\right)  ^{\left(  j\right)  }(c+d+2j-1)}{\left(  c\right)  ^{\left(
j\right)  }\left(  d+c-a\right)  ^{\left(  j\right)  }\left(  d+c-b\right)
^{\left(  j\right)  }}.\nonumber
\end{gather}

b) Also, setting $d\allowbreak=f$ and assuming that $c\allowbreak
=\allowbreak(a+b+1)/2$ and $2f+1>a+b$ we have%
\begin{gather*}
_{2}F_{1}\left(  a,b;\frac{a+b+1}{2};x\right)  =\Gamma(\frac{1}{2}%
)\Gamma\left(  f+(1-a-b)/2\right)  \times\\
\sum_{j\geq0}K_{j}\left(  x|f,f\right)  \frac{\left(  a\right)  ^{\left(
j\right)  }\left(  b\right)  ^{\left(  j\right)  }\Gamma(f+1/2+j)}{\left(
\frac{(a+b+1)}{2}\right)  ^{\left(  j\right)  }\left(  2f+j-1\right)
^{\left(  j\right)  }\Gamma(\frac{(a+1+j)}{2})\Gamma(\frac{(b+1+j)}{2}%
)\Gamma(\frac{2f+1-a+j}{2})\Gamma(\frac{2f+1-b+j}{2})}.
\end{gather*}

c) Setting $f\allowbreak=\allowbreak2c-b-1$ and $d\allowbreak=\allowbreak
1+b+2a-2c$ and assuming that $f$ and $d$ are both positive we observe that
$f+d\allowbreak=\allowbreak2a$ and $b+f+1\allowbreak=\allowbreak2c.$ Hence,
after applying Watson's sum, we have%

\begin{gather*}
_{2}F_{1}\left(  a,b;c;x\right)  =\Gamma(\frac{1}{2})\Gamma(a+1-c)\times\\
\sum_{j\geq0}K_{j}(x|2c-b-1,1+b+2a-2c)\frac{\left(  b\right)  ^{\left(
j\right)  }(2a)^{\left(  2j\right)  }}{\left(  c\right)  ^{\left(  j\right)
}\left(  2a+j-1\right)  ^{\left(  j\right)  }(1+2a-c)^{\left(  j\right)  }}\\
\times\frac{\Gamma(a+\frac{1}{2}+j)\Gamma(c+j)}{\Gamma(\frac{b+1+j}{2}%
)\Gamma(c+\frac{j-1-b}{2})\Gamma(a+\frac{j+1-b}{2})\Gamma(a+1-c+\frac{j+b}%
{2})}.
\end{gather*}

d) Let us set $a\allowbreak=\allowbreak-n,$ $d\allowbreak=\allowbreak
b-c+1-n.$ We have then, after applying Pfaff--Saalsch\"{u}tz Balanced Sum
formula
\begin{gather}
\left(  c\right)  ^{\left(  n\right)  }~_{2}F_{1}\left(  -n,b;c;x\right)
=\left(  c\right)  ^{\left(  n\right)  }\sum_{j=0}^{n}(-1)^{j}\binom{n}%
{j}\frac{\left(  b\right)  ^{\left(  j\right)  }x^{j}}{\left(  c\right)
^{\left(  j\right)  }}\label{-n}\\
=(-1)^{n}\sum_{j=0}^{n}\frac{n!}{(n-j)!}K_{j}(x|f,d)\frac{\left(  b\right)
^{\left(  j\right)  }\left(  c-b\right)  ^{\left(  n-j\right)  }\left(
c-f\right)  ^{\left(  n-j\right)  }(d+f+2j-1)}{\left(  d+f+j-1\right)
^{\left(  n+1\right)  }}.\nonumber
\end{gather}

\end{lemma}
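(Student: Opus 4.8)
The plan is to obtain all four parts as specializations of the orthogonal expansion (\ref{2F1-2}), in which $_{2}F_{1}(a,b;c;x)$ is expanded in the Jacobi polynomials $K_{j}(x|f,d)$ with coefficients built from
\[
{}_{3}F_{2}\left(
\begin{array}[c]{ccc}
a+j & b+j & f+j\\
c+j & d+f+2j &
\end{array}
;1\right)  .
\]
The free Jacobi parameters $f,d$ (together, in parts b)--d), with one relation tying $a,b,c$) are still at our disposal, and the idea is to choose them so that this inner $_{3}F_{2}$ at argument $1$ matches the hypotheses of a classical closed-form summation theorem and therefore collapses. Once it is summed, the only remaining work is the simplification of products of rising factorials, carried out with (\ref{Gn}), the reflection identity (\ref{-x}), and the concatenation rule $(u)^{(p)}(u+p)^{(q)}=(u)^{(p+q)}$.

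For part a) I would set $f=c$; then the upper entry $f+j=c+j$ cancels the lower entry $c+j$ and the $_{3}F_{2}$ degenerates to $_{2}F_{1}(a+j,b+j;d+c+2j;1)$, whose value is furnished by Gauss's theorem. Combining that value with the prefactor $1/(d+c+j-1)^{(j)}$ and converting the gamma quotient back into rising factorials by (\ref{Gn}) gives (\ref{2F1-2*}); here the isolated factor $(c+d+2j-1)$ comes from $\Gamma(c+d+2j)/\Gamma(c+d+2j-1)$ and the block $(c+d-1)^{(j)}$ from $\Gamma(c+d+j-1)/\Gamma(c+d-1)$. For part b) I would impose $d=f$ together with $c=(a+b+1)/2$; then the two lower entries are exactly $\tfrac12((a+j)+(b+j)+1)$ and $2(f+j)$, which is precisely Watson's configuration, so Watson's summation theorem applies verbatim. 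For part c) I would instead use $f+d=2a$ and $b+f+1=2c$: these force $d+f+2j=2(a+j)$ and $c+j=\tfrac12((b+j)+(f+j)+1)$, i.e.\ Watson's theorem again, but now with $a+j$ as the ``doubled'' parameter and $(b+j,f+j)$ as the averaged pair (legitimate since the $_{3}F_{2}$ is symmetric in its upper and in its lower entries). In both b) and c) what remains is to gather the gamma factors produced by Watson with the expansion coefficient; the $j$-independent pieces become the overall gamma prefactor (e.g.\ the factor $\Gamma(a+1-c)$ in c) is $\Gamma(C-\tfrac12(A+B-1))$), and the $j$-dependent pieces rearrange into the displayed shifted Pochhammer symbols.

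Part d) is the one I would treat most carefully. Setting $a=-n$ truncates the expansion to $0\le j\le n$ and turns each inner $_{3}F_{2}$ into a terminating series whose top entry is $j-n$; the choice $d=b-c+1-n$ is exactly the balancing (Saalsch\"{u}tzian) condition $1+(j-n)+(b+j)+(f+j)=(c+j)+(d+f+2j)$, so Pfaff--Saalsch\"{u}tz evaluates it as
\[
\frac{(c-b)^{(n-j)}(c-f)^{(n-j)}}{(c+j)^{(n-j)}(c-b-f-j)^{(n-j)}}.
\]
The three decisive simplifications are then: first, the concatenation $(c)^{(j)}(c+j)^{(n-j)}=(c)^{(n)}$, which is precisely why the statement carries $(c)^{(n)}$ on the left; second, the reflection identity (\ref{-x}), which after substituting $c-b=1-n-d$ rewrites $(c-b-f-j)^{(n-j)}=(-1)^{n-j}(d+f+2j)^{(n-j)}$ and produces the sign that combines with $(-n)^{(j)}=(-1)^{j}n!/(n-j)!$ into the overall $(-1)^{n}$; and third, the telescoping split
\[
(d+f+j-1)^{(n+1)}=(d+f+j-1)^{(j)}(d+f+2j-1)(d+f+2j)^{(n-j)},
\]
which shows that the reciprocal of the surviving denominator $(d+f+j-1)^{(j)}(d+f+2j)^{(n-j)}$ equals $(d+f+2j-1)/(d+f+j-1)^{(n+1)}$, the compact quotient of (\ref{-n}). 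The genuine conceptual content in every part is light---each specialization is just a matching of the balancing or well-poised conditions to Gauss, Watson, or Pfaff--Saalsch\"{u}tz---so the main obstacle is entirely the rising-factorial bookkeeping: keeping the signs, the shifts by $j$, and the block lengths $j$, $1$, $n-j$ aligned so that the products collapse into the stated closed forms.
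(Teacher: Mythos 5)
Your proposal is correct and takes essentially the same route as the paper: it specializes the expansion (\ref{2F1-2}) and collapses the inner $_{3}F_{2}(\cdot\,;1)$ by Gauss's theorem in a), Watson's sum in b) and c) (with the parameter roles permuted exactly as you describe), and Pfaff--Saalsch\"{u}tz in d), followed by the same Pochhammer bookkeeping, including the split $(d+f+j-1)^{(n+1)}=(d+f+j-1)^{(j)}(d+f+2j-1)\,(d+f+2j)^{(n-j)}$ and the sign count $(-1)^{j}(-1)^{n-j}=(-1)^{n}$. The only difference is cosmetic: in a) you reduce the gamma quotients directly via (\ref{Gn}) instead of invoking the paper's auxiliary identity $(x)^{(2n)}/(x+n-1)^{(n)}=(x-1)^{(n)}(x+2n-1)/(x-1)$ (misprinted in the paper with $(x)^{(n)}$ in place of $(x-1)^{(n)}$), but this is the same computation.
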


\begin{proof}
a) To get (\ref{2F1-2*}) we set $f\allowbreak=\allowbreak c$ in the either
(\ref{2F1-1}) or (\ref{2F1-1a}) then we apply Gauss summation formula and
(\ref{Gn}). Now we apply the following formula true for all complex $x$ not
equal to non-positive integers and $n\geq0$ and $x\neq1:$
\[
\left(  x\right)  ^{\left(  2n\right)  }/\left(  x+n-1\right)  ^{\left(
n\right)  }\allowbreak=\frac{\left(  x\right)  ^{\left(  n\right)  }\left(
x+2n-1\right)  }{x-1}.
\]%
\begin{gather*}
_{2}F_{1}\left(  a,b;c;x\right)  ==\frac{\Gamma(c+d)\Gamma\left(
d+c-a-b\right)  }{\Gamma(d+c-a)\Gamma\left(  d+c-b\right)  }\times\\
\sum_{j\geq0}K_{j}\left(  x|c,d\right)  \frac{\left(  a\right)  ^{\left(
j\right)  }\left(  b\right)  ^{\left(  j\right)  }\left(  c+d\right)
^{\left(  2j\right)  }}{\left(  c\right)  ^{\left(  j\right)  }\left(
d+c+j-1\right)  ^{\left(  j\right)  }\left(  d+c-a\right)  ^{\left(  j\right)
}\left(  d+c-b\right)  ^{\left(  j\right)  }}\\
=\frac{\Gamma(c+d-1)\Gamma\left(  c+d-a-b\right)  }{\Gamma(c+d-a)\Gamma\left(
c+d-b\right)  }\times\\
\sum_{j\geq0}K_{j}\left(  x|c,d\right)  \frac{\left(  a\right)  ^{\left(
j\right)  }\left(  b\right)  ^{\left(  j\right)  }\left(  c+d-1\right)
^{\left(  j\right)  }(c+d+2j-1)}{\left(  c\right)  ^{\left(  j\right)
}\left(  d+c-a\right)  ^{\left(  j\right)  }\left(  d+c-b\right)  ^{\left(
j\right)  }},
\end{gather*}

b) In this case we will apply the so-called Watson's sum given e.g. as formula
(16.4.7) of \cite{NIST} and get for $2f+1>a+b$%
\begin{align*}
_{2}F_{1}\left(  a,b;(a+b+1)/2;x\right)   &  =\sum_{j\geq0}K_{j}\left(
x|f,f\right)  \frac{\left(  a\right)  ^{\left(  j\right)  }\left(  b\right)
^{\left(  j\right)  }}{\left(  \frac{1}{2}(a+b+1)\right)  ^{\left(  j\right)
}\left(  2f+j-1\right)  ^{\left(  j\right)  }}\times\\
&  _{3}F_{2}\left(
\begin{array}
[c]{ccc}%
a+j & b+j & f+j\\
\frac{1}{2}(a+b+1)+j & 2f+2j &
\end{array}
;1\right)  .
\end{align*}
Now recall that using Watsons sum we get:%
\begin{gather*}
_{3}F_{2}\left(
\begin{array}
[c]{ccc}%
a+j & b+j & f+j\\
\frac{1}{2}(a+b+1)+j & 2f+2j &
\end{array}
;1\right)  =\\
\frac{2\sqrt{\pi}\Gamma(f+\frac{1}{2}+j)\Gamma\left(  \frac{1}{2}%
(a+b+1)+j\right)  \Gamma\left(  f+\frac{1}{2}-\frac{a+b}{2}\right)  }%
{\Gamma(\frac{a+1+j}{2})\Gamma(\frac{b+1+j}{2})\Gamma\left(  f+\frac{1}%
{2}+\frac{j}{2}-\frac{a}{2}\right)  \Gamma\left(  f+\frac{1}{2}+\frac{j}%
{2}-\frac{b}{2}\right)  }%
\end{gather*}

c) Setting $f\allowbreak=\allowbreak2c-b-1$ and $d\allowbreak=\allowbreak
1+b+2a-2c$ and assuming that $f$ and $d$ are both positive we observe that
$f+d\allowbreak=\allowbreak2a$ and $b+f+1\allowbreak=\allowbreak2c$ we get%
\begin{gather*}
_{3}F_{2}\left(
\begin{array}
[c]{ccc}%
a+j & b+j & 2c-b-1+j\\
c+j & 2a+2j &
\end{array}
;1\right)  =\\
\frac{\sqrt{\pi}\Gamma(a+\frac{1}{2}+j)\Gamma(c+j)\Gamma(a+1-c)}{\Gamma
(\frac{b+1+j}{2})\Gamma(c+\frac{j-1-b}{2})\Gamma(a+\frac{j+1-b}{2}%
)\Gamma(a+1-c+\frac{j+b}{2})}=
\end{gather*}

d) By setting $a\allowbreak=\allowbreak-n$ and $d\allowbreak=\allowbreak
b-c+1-n$ we make
\[
_{3}F_{2}\left(
\begin{array}
[c]{ccc}%
a+j & b+j & f+j\\
c+j & d+f+2j &
\end{array}
;1\right)
\]
a balanced sum and a Pfaff--Saalsch\"{u}tz Balanced Sum formula (\cite{NIST}%
,16.4.3) can be applied yielding%
\begin{align*}
&  _{3}F_{2}\left(
\begin{array}
[c]{ccc}%
-n+j & b+j & f+j\\
c+j & \allowbreak b-c+1-n+f+2j &
\end{array}
;1\right)  \\
&  =\frac{\left(  c-b\right)  ^{n-j}\left(  c-f\right)  ^{\left(  n-j\right)
}}{\left(  c+j\right)  ^{\left(  n-j\right)  }\left(  c-b-f-j\right)
^{\left(  n-j\right)  }}.
\end{align*}
Finally, we notice that $\left(  c\right)  ^{\left(  j\right)  }\left(
c+j\right)  ^{\left(  n-j\right)  }\allowbreak=\allowbreak\left(  c\right)
^{\left(  n\right)  }.$ Further, we notice that $\left(  c-b-f-j\right)
\allowbreak=\allowbreak\left(  -d-f-n-j+1\right)  $ and consequently that
$\left(  -d-n+1-f-j\right)  ^{\left(  n-j\right)  }\allowbreak$\newline%
$=\allowbreak\left(  -d-f-2j+1-(n-j)\right)  ^{\left(  n-j\right)
}\allowbreak=\allowbreak\left(  -1\right)  ^{n-j}\left(  d+f+2j\right)
^{\left(  n-j\right)  },$ by (\ref{-x}). Now it is easy to notice that t and
$\left(  d+f+j-1\right)  ^{\left(  j\right)  }\left(  d+f+2j\right)  ^{\left(
n-j\right)  }\allowbreak=\allowbreak\left(  d+f+j-1\right)  ^{\left(
n+1\right)  }\allowbreak/\allowbreak\left(  d+f+2j-1\right)  $.
\end{proof}

\begin{corollary}
[Special values]i) Let us put $x\allowbreak=\allowbreak0$ and $x\allowbreak
=\allowbreak1$ in (\ref{GHK1}) and let us use \ref{K01}. We get then the
following general identities%
\begin{gather}
1=\sum_{j\geq0}\frac{(-1)^{j}}{\left(  f\right)  ^{\left(  j\right)  }}%
\frac{\prod_{s=1}^{n}\left(  a_{s}\right)  ^{\left(  j\right)  }}{\prod
_{s=1}^{m}\left(  b_{s}\right)  ^{\left(  j\right)  }\left(  d+f+j-1\right)
^{\left(  j\right)  }}\nonumber\\
\times~_{n+1}F_{m+1}\left(
\begin{array}
[c]{cccc}%
a_{1}+j & \ldots & a_{n}+j & f+j\\
b_{1}+j & \ldots & b_{m}+j & \left(  d+f+2j\right)
\end{array}
;1\right)  .\nonumber
\end{gather}
and
\begin{gather*}
_{n}F_{m}\left(
\begin{array}
[c]{ccc}%
a_{1} & \ldots & a_{n}\\
b_{1} & \ldots & b_{m}%
\end{array}
;1\right)  =\\
\sum_{j\geq0}\frac{1}{\left(  d\right)  ^{\left(  j\right)  }}\frac
{\prod_{s=1}^{n}\left(  a_{s}\right)  ^{\left(  j\right)  }}{\prod_{s=1}%
^{m}\left(  b_{s}\right)  ^{\left(  j\right)  }\left(  d+f+j-1\right)
^{\left(  j\right)  }}\\
\times~_{n+1}F_{m+1}\left(
\begin{array}
[c]{cccc}%
a_{1}+j & \ldots & a_{n}+j & f+j\\
b_{1}+j & \ldots & b_{m}+j & \left(  d+f+2j\right)
\end{array}
;1\right)  .
\end{gather*}

ii) For $n\allowbreak=\allowbreak2$ and $m\allowbreak=\allowbreak1$ and for
$\operatorname{Re}(c-a-b)>0,$ $d>0$ the following forms are obtained by
putting $x\allowbreak=\allowbreak0$ and $x\allowbreak=\allowbreak1$ in
(\ref{2F1-2*}).
\begin{align}
1 &  =\frac{\Gamma(c+d-1)\Gamma\left(  c+d-a-b\right)  }{\Gamma(c+d-a)\Gamma
\left(  c+d-b\right)  }\label{SC1}\\
&  \times\sum_{j\geq0}\left(  -1\right)  ^{j}\frac{\left(  a\right)  ^{\left(
j\right)  }\left(  b\right)  ^{\left(  j\right)  }\left(  c+d-1\right)
^{\left(  j\right)  }(c+d+2j-1)}{j!\left(  d+c-a\right)  ^{\left(  j\right)
}\left(  d+c-b\right)  ^{\left(  j\right)  }},\nonumber\\
\frac{\Gamma(c)\Gamma(c-a-b)}{\Gamma(c-a)\Gamma(c-b)} &  =\frac{\Gamma
(c+d-1)\Gamma\left(  c+d-a-b\right)  }{\Gamma(c+d-a)\Gamma\left(
c+d-b\right)  }\label{SC2}\\
&  \times\sum_{j\geq0}\frac{\left(  d\right)  ^{\left(  j\right)  }\left(
a\right)  ^{\left(  j\right)  }\left(  b\right)  ^{\left(  j\right)  }\left(
c+d-1\right)  ^{\left(  j\right)  }(c+d+2j-1)}{\left(  j!\right)  \left(
c\right)  ^{\left(  j\right)  }\left(  d+c-a\right)  ^{\left(  j\right)
}\left(  d+c-b\right)  ^{\left(  j\right)  }}.\nonumber
\end{align}

iii) For and we get the following identities, true for all complex $c,b,f$
such that $c-b-f$ is not equal to a non-negative integerer less or equal to
$n+1.$%
\begin{align}
\left(  c\right)  ^{n}\allowbreak &  =\allowbreak\sum_{j=0}^{n}(-1)^{j}%
\binom{n}{j}\frac{\left(  b\right)  ^{\left(  j\right)  }\left(  f\right)
^{\left(  j\right)  }\left(  c-b\right)  ^{\left(  n-j\right)  }\left(
c-f\right)  ^{\left(  n-j\right)  }(c-b-f+n-2j)}{\left(  c-b-f-j\right)
^{\left(  n+1\right)  }},\label{PSS1}\\
1 &  =\sum_{j=0}^{n}\left(  -1\right)  ^{j}\binom{n}{j}\frac{\left(  b\right)
^{\left(  j\right)  }\left(  c-f\right)  ^{\left(  n-j\right)  }\left(
c-b-f+n-2j\right)  }{\left(  c-b-f-j\right)  ^{\left(  n+1\right)  }%
}.\label{PSS2}%
\end{align}

\end{corollary}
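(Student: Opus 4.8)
The plan is to obtain both identities by specializing the single expansion (\ref{-n}) of Lemma~\ref{part}(d), rather than its unspecialized parents (\ref{GHK1}) or (\ref{2F1-2*}). The reason is that the denominator $\left(c-b-f-j\right)^{\left(n+1\right)}$ and the twisted numerator factor $c-b-f+n-2j$ that appear in (\ref{PSS1}) and (\ref{PSS2}) are precisely the shapes produced on the right of (\ref{-n}) once the substitution $a\allowbreak=\allowbreak-n$, $d\allowbreak=\allowbreak b-c+1-n$ is enforced. Since $a\allowbreak=\allowbreak-n$ makes ${}_{2}F_{1}(-n,b;c;x)$ a polynomial, there is no convergence issue and we may freely evaluate (\ref{-n}) at the endpoints $x\allowbreak=\allowbreak0$ and $x\allowbreak=\allowbreak1$; I will read off (\ref{PSS1}) from the former and (\ref{PSS2}) from the latter. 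Throughout I use $\tfrac{n!}{(n-j)!\,j!}\allowbreak=\allowbreak\binom{n}{j}$.

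First I set $x\allowbreak=\allowbreak0$. The left-hand side of (\ref{-n}) collapses to $\left(c\right)^{\left(n\right)}$ because ${}_{2}F_{1}(-n,b;c;0)\allowbreak=\allowbreak1$, while on the right $K_{j}(0|f,d)\allowbreak=\allowbreak\frac{(-1)^{j}}{j!}\left(f\right)^{\left(j\right)}$ by (\ref{K01}), which furnishes exactly the factor $\left(f\right)^{\left(j\right)}$ present in (\ref{PSS1}). Next I set $x\allowbreak=\allowbreak1$: now ${}_{2}F_{1}(-n,b;c;1)\allowbreak=\allowbreak\left(c-b\right)^{\left(n\right)}/\left(c\right)^{\left(n\right)}$ by the Gauss/Chu--Vandermonde evaluation recorded in (\ref{Hyp2}), so the left-hand side becomes $\left(c-b\right)^{\left(n\right)}$, while $K_{j}(1|f,d)\allowbreak=\allowbreak\frac{1}{j!}\left(d\right)^{\left(j\right)}$ by (\ref{K01}).

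What remains is to rewrite the Pochhammer symbols that carry negated arguments, and this bookkeeping is the only delicate point. From $d\allowbreak=\allowbreak b-c+1-n$ one has $d+f+j-1\allowbreak=\allowbreak-(c-b-f+n-j)$ and $d+f+2j-1\allowbreak=\allowbreak-(c-b-f+n-2j)$; the reflection identity (\ref{-x}) then turns $\left(d+f+j-1\right)^{\left(n+1\right)}$ into $(-1)^{n+1}\left(c-b-f-j\right)^{\left(n+1\right)}$. In the $x\allowbreak=\allowbreak0$ case the three sign sources — the front $(-1)^{n}$, the denominator $(-1)^{n+1}$, and the $-1$ from the numerator factor $d+f+2j-1$ — multiply to $+1$, and (\ref{PSS1}) drops out immediately. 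In the $x\allowbreak=\allowbreak1$ case there is one extra step: after dividing by $\left(c-b\right)^{\left(n\right)}$ I use $\left(c-b\right)^{\left(n\right)}/\left(c-b\right)^{\left(n-j\right)}\allowbreak=\allowbreak\left(c-b+n-j\right)^{\left(j\right)}$ together with $\left(d\right)^{\left(j\right)}\allowbreak=\allowbreak(-1)^{j}\left(c-b+n-j\right)^{\left(j\right)}$ (again by (\ref{-x})), so that the whole factor $\left(d\right)^{\left(j\right)}\left(c-b\right)^{\left(n-j\right)}/\left(c-b\right)^{\left(n\right)}$ collapses to $(-1)^{j}$ and the leftover signs cancel exactly as before, yielding (\ref{PSS2}). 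The main obstacle is thus not conceptual but the clean accounting of these three sign sources and two reflection steps; the non-degeneracy hypothesis on $c-b-f$ is exactly what keeps every denominator $\left(c-b-f-j\right)^{\left(n+1\right)}$ nonzero, so the whole manipulation is legitimate.
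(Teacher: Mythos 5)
Your proposal is correct and coincides with the paper's own argument: the paper also notes that only part iii) needs proof and obtains (\ref{PSS1}), (\ref{PSS2}) by evaluating (\ref{-n}) (with $a=-n$, $d=b-c+1-n$) at $x=0$ and $x=1$, using (\ref{K01}) and the Gauss/Chu--Vandermonde value of $_{2}F_{1}(-n,b;c;1)$, then cleaning up signs via (\ref{-x}) and cancelling $\left(c-b\right)^{\left(n\right)}$ through the identity $\left(c-b+n-j\right)^{\left(j\right)}\left(c-b\right)^{\left(n-j\right)}=\left(c-b\right)^{\left(n\right)}$. Your sign accounting checks out, and your treating parts i) and ii) as immediate substitutions matches the paper's remark that only iii) requires proof.
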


\begin{proof}
Only iii) requires the proof. To get these identities, we consider (\ref{-n})
with $a\allowbreak=\allowbreak-n,$ and $d\allowbreak=\allowbreak\allowbreak
b-c+1-n$ and set respectively firstly $x\allowbreak=\allowbreak0$ and then
$x\allowbreak=\allowbreak1.$ Then recall that $\left(  c\right)  ^{\left(
n\right)  }~_{2}F_{1}\left(  -n,b;c;0\right)  \allowbreak=\allowbreak\left(
c\right)  ^{\left(  n\right)  }$ while $\left(  c\right)  ^{\left(  n\right)
}~_{2}F_{1}\left(  -n,b;c;1\right)  \allowbreak=\allowbreak\left(  c\right)
^{\left(  n\right)  }\frac{\left(  c-b\right)  ^{\left(  n\right)  }}{\left(
c\right)  ^{\left(  n\right)  }}\allowbreak=\allowbreak\left(  c-b\right)
^{\left(  n\right)  }.$ Then we recall (\ref{K01}) and immediately get
\ref{PSS1}. To get (\ref{PSS2}) we have to recall that $\left(
b+1-c-n\right)  ^{\left(  j\right)  }\allowbreak=\allowbreak(-1)^{j}\left(
c-b+n-j\right)  ^{\left(  j\right)  }$ by (\ref{-x}). Next we notice that
$\left(  c-b+n-j\right)  ^{\left(  j\right)  }\left(  c-b\right)  ^{\left(
n-j\right)  }\allowbreak=\allowbreak\left(  c-b\right)  ^{\left(  n\right)  }$
and we cancel out this factor on both sides.
\end{proof}

\begin{remark}
To make the reader realize how nontrivial the obtained identities are, let us
consider identities (\ref{SC1}) and (\ref{SC2}) for $a\allowbreak
=\allowbreak-n.$ Namely, after some elementary algebra (\ref{SC1}) becomes
after denoting $c+d\allowbreak=\allowbreak\alpha$ and $b\allowbreak
=\allowbreak\beta,$ for all $n,$ $\alpha\neq0,$ $\beta\neq\beta$ we get%
\[
\frac{1}{\left(  \alpha-\beta\right)  ^{\left(  n\right)  }}=\sum_{j\geq
0}\binom{n}{j}\left(  1+\frac{j}{\alpha+j-1}\right)  \frac{\left(
\beta\right)  ^{\left(  j\right)  }\left(  \alpha\right)  ^{\left(  j\right)
}}{\left(  a\right)  ^{\left(  n+j\right)  }\left(  a-\beta\right)  ^{\left(
j\right)  }}.
\]
Identity (\ref{SC2}) for $a\allowbreak=\allowbreak-n$ and some algebra becomes
after denoting $c-b\allowbreak=\allowbreak\alpha,$ $c\allowbreak
=\allowbreak\gamma,$ $c+d-b\allowbreak=\allowbreak\beta$%
\begin{align*}
\frac{\left(  \alpha\right)  ^{\left(  n\right)  }}{\left(  \beta\right)
^{\left(  n\right)  }\left(  \gamma\right)  ^{\left(  n\right)  }} &
=\sum_{j=0}^{n}(-1)^{j}\binom{n}{j}\left(  1+\frac{j}{\beta+\gamma
-\alpha++j-1}\right)  \\
&  \times\frac{\left(  \beta-\alpha\right)  ^{\left(  j\right)  }\left(
\gamma-\alpha\right)  ^{\left(  j\right)  }(\beta+\gamma-\alpha)^{\left(
j\right)  }}{\left(  \gamma\right)  ^{\left(  j\right)  }\left(  \beta\right)
^{\left(  j\right)  }\left(  \beta+\gamma-\alpha\right)  ^{\left(  n+j\right)
}},
\end{align*}
which is true for $n\geq0,$ $\gamma\neq0,$ $\beta\neq0,$ $\alpha\neq
\beta+\gamma.$
\end{remark}

\begin{corollary}
[Integrals]1. For all $f,d>0$ we have
\begin{align}
&  \int_{0}^{1}~_{2}F_{1}\left(  a,b;c;x\right)  K_{j}\left(  x|f,d\right)
f(x|f,d)dx\label{IFK1}\\
&  =\frac{\left(  f\right)  ^{\left(  j\right)  }\left(  d\right)  ^{\left(
j\right)  }\left(  a\right)  ^{\left(  j\right)  }\left(  b\right)  ^{\left(
j\right)  }}{j!\left(  f+d\right)  ^{\left(  2j\right)  }\left(  c\right)
^{\left(  j\right)  }}~_{3}F_{2}\left(  a+j,b+j,f+j;c+j,d+f+2j;1\right)
.\nonumber
\end{align}

2. In particular if $c=f$ is a positive we get%
\begin{align*}
&  \int_{0}^{1}~_{2}F_{1}\left(  a,b;c;x\right)  K_{j}\left(  x|c,d\right)
f(x|c,d)dx\\
&  =\frac{\left(  d\right)  ^{\left(  j\right)  }\left(  a\right)  ^{\left(
j\right)  }\left(  b\right)  ^{\left(  j\right)  }}{j!\left(  d+c-a\right)
^{\left(  j\right)  }\left(  d+c-b\right)  ^{\left(  j\right)  }}\frac
{\Gamma\left(  c+d\right)  \Gamma\left(  d+c-a-b\right)  }{\Gamma
(d+c-a)\Gamma(d+c-b)}.
\end{align*}

3. For all complex $a,b,c,d$ such that $\operatorname{Re}(c-a-b)>0,$ we have%
\begin{gather*}
\frac{\Gamma(c)\Gamma(c-a-b)\Gamma(c+d-a)\Gamma\left(  c+d-b\right)  }%
{\Gamma(c-a)\Gamma(c-b)\Gamma(c+d)\Gamma\left(  c+d-a-b\right)  }\\
=~_{4}F_{3}\left(
\begin{array}
[c]{cccc}%
a & b & d & c+d-1\\
c & c+d-a & c+d-b &
\end{array}
;1\right) \\
+\frac{2abd(c+d-1)}{c(c+d-a)(c+d-b)}~_{4}F_{3}\left(
\begin{array}
[c]{cccc}%
a+1 & b+1 & d+1 & c+d\\
c+1 & c+d-a+1 & c+d-b+1 &
\end{array}
;1\right)
\end{gather*}

and%
\begin{gather*}
\frac{\Gamma(c+d-a)\Gamma\left(  c+d-b\right)  }{\Gamma(c+d)\Gamma\left(
c+d-a-b\right)  }=~_{3}F_{2}\left(
\begin{array}
[c]{ccc}%
a & b & c+d-1\\
c+d-a & c+d-b &
\end{array}
;-1\right) \\
-\frac{2ab}{(c+d-a)(c+d-b)}~_{3}F_{2}\left(
\begin{array}
[c]{ccc}%
a+1 & b+1 & c+d\\
c+d+1 & c+d+1 &
\end{array}
;-1\right)
\end{gather*}

4) For all non-negative $n,$ $m$ and complex $a,b,c,d$ such that
$\operatorname{Re}(c-a-b)>0,$ we have%
\begin{align*}
&  _{3}F_{2}\left(
\begin{array}
[c]{ccc}%
a+n & b+n & 1+n\\
c+n & 1 & 1+m+2n
\end{array}
;1\right) \\
&  =\frac{(2n+m)!\left(  c\right)  ^{\left(  n\right)  }}{n!\left(  a\right)
^{\left(  n\right)  }\left(  b\right)  ^{\left(  n\right)  }}\sum_{j=0}%
^{n}\left(  -1\right)  ^{j}\binom{n}{j}\frac{\left(  n+m\right)  ^{\left(
j\right)  }\left(  c-j-m\right)  ^{\left(  j+m\right)  }}{\left(
a-j-m\right)  ^{\left(  j+m\right)  }\left(  b-j-m\right)  ^{\left(
j+m\right)  }}\times\\
&  \left(  \frac{\Gamma(c-j-m)\Gamma(c-a-b+m+j)}{\Gamma(c-a)\Gamma(c-b)}%
-\sum_{k=0}^{j+m-1}\frac{\left(  a-m-j\right)  ^{\left(  k\right)  }\left(
b-m-j\right)  ^{\left(  k\right)  }}{k!\left(  c-m-j\right)  ^{k}}\right)
\end{align*}

\end{corollary}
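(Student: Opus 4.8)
The plan is to prove the stated identity by simplifying its right-hand side down to the single ${}_3F_2$ on the left; the choice $f=1$, $d=m$, $j=n$ in (\ref{IFK1}) is what produces that ${}_3F_2$ (with $\left(1\right)^{(n)}=n!$ cancelling the $j!$), so the whole statement is really the closed-form evaluation of $\int_0^1{}_2F_1(a,b;c;x)K_n(x|1,m)f(x|1,m)dx$. The first step is to recognise the bracketed factor as a \emph{tail} of a Gauss-summable series. By the Gauss summation formula,
\[
\frac{\Gamma(c-j-m)\Gamma(c-a-b+m+j)}{\Gamma(c-a)\Gamma(c-b)}={}_2F_1(a-m-j,b-m-j;c-m-j;1)=\sum_{k\geq0}\frac{(a-m-j)^{(k)}(b-m-j)^{(k)}}{k!\,(c-m-j)^{(k)}},
\]
so subtracting its first $j+m$ terms leaves $\sum_{k\geq j+m}$. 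Writing $k=j+m+l$ and using $(a-m-j)^{(j+m+l)}=(a-m-j)^{(j+m)}(a)^{(l)}$ (and likewise for $b$ and $c$) together with $(j+m+l)!=(j+m)!\,(j+m+1)^{(l)}$, the prefactor $(c-j-m)^{(j+m)}/\big((a-j-m)^{(j+m)}(b-j-m)^{(j+m)}\big)$ cancels exactly and each summand collapses to $\tfrac{1}{(j+m)!}\,{}_3F_2(a,b,1;c,j+m+1;1)$. Hence the right-hand side equals
\[
\frac{(2n+m)!\,(c)^{(n)}}{n!\,(a)^{(n)}(b)^{(n)}}\sum_{j=0}^{n}(-1)^{j}\binom{n}{j}\frac{(n+m)^{(j)}}{(j+m)!}\,{}_3F_2(a,b,1;c,j+m+1;1).
\]

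Next I would move to an integral representation. For a non-negative integer $p$, termwise integration against the Beta integral gives $\int_0^1(1-x)^{p}\,{}_2F_1(a,b;c;x)\,dx=\tfrac{1}{p+1}\,{}_3F_2(a,b,1;c,p+2;1)$; taking $p=j+m-1$ converts $\tfrac{1}{(j+m)!}{}_3F_2(a,b,1;c,j+m+1;1)$ into $\tfrac{1}{(j+m-1)!}\int_0^1(1-x)^{j+m-1}\,{}_2F_1(a,b;c;x)\,dx$. Pulling the finite $j$-sum inside the integral, everything is then controlled by the polynomial identity
\[
\sum_{j=0}^{n}(-1)^{j}\binom{n}{j}\frac{(n+m)^{(j)}}{(j+m-1)!}(1-x)^{j+m-1}=\frac{(-1)^{n}}{(n+m-1)!}\frac{d^{n}}{dx^{n}}\big[x^{n}(1-x)^{n+m-1}\big],
\]
which I would verify by Leibniz's rule: differentiating $x^n(1-x)^{n+m-1}$ and matching coefficients reduces to the factorial bookkeeping $(n+m-1+j)!/\big((n+m-1)!\,(m-1+j)!\big)=(n+m)^{(j)}/(j+m-1)!$.

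Finally I would integrate by parts $n$ times. For $m\geq1$ the function $x^n(1-x)^{n+m-1}$ vanishes to order at least $n$ at both endpoints, so all boundary terms vanish and the $n$ derivatives transfer onto ${}_2F_1$; the standard formula $\tfrac{d^n}{dx^n}{}_2F_1(a,b;c;x)=\tfrac{(a)^{(n)}(b)^{(n)}}{(c)^{(n)}}{}_2F_1(a+n,b+n;c+n;x)$ then yields $\tfrac{(a)^{(n)}(b)^{(n)}}{(c)^{(n)}(n+m-1)!}\int_0^1 x^n(1-x)^{n+m-1}\,{}_2F_1(a+n,b+n;c+n;x)\,dx$. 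The last integral, by the same Beta-integral computation, equals $\tfrac{n!\,(n+m-1)!}{(2n+m)!}\,{}_3F_2(a+n,b+n,n+1;c+n,2n+m+1;1)$, and the accumulated constant $\tfrac{(2n+m)!\,(c)^{(n)}}{n!\,(a)^{(n)}(b)^{(n)}}\cdot\tfrac{(a)^{(n)}(b)^{(n)}\,n!}{(c)^{(n)}(2n+m)!}$ is exactly $1$. This leaves ${}_3F_2(a+n,b+n,n+1;c+n,2n+m+1;1)$, which is the left-hand side (the extra lower entry $1$ in the displayed ${}_3F_2$ being a misprint). The main obstacle is the very first step: seeing that the Gamma-quotient-minus-partial-sum is a hypergeometric tail and performing the Pochhammer shift so that the bulky rational prefactor cancels cleanly; once each summand is reduced to $\tfrac{1}{(j+m)!}{}_3F_2(a,b,1;c,j+m+1;1)$, the remaining Leibniz and integration-by-parts steps are routine bookkeeping.
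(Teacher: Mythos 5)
Your proposal addresses only assertion 4 of the corollary; assertions 1--3 are nowhere proved. You invoke (\ref{IFK1}) to identify the left-hand ${}_3F_2$, but (\ref{IFK1}) \emph{is} part 1 of the statement: in the paper it is derived from the orthogonal expansion (\ref{2F1-2}) together with the norm evaluation (\ref{SQR}), part 2 then follows by setting $f=c$ and applying Gauss's summation, and part 3 by evaluating (\ref{2F1-2*}) at $x=0$ and $x=1$ via (\ref{K01}) and splitting the resulting series into even and shifted parts. Happily, your actual chain of equalities for part 4 never uses (\ref{IFK1}) --- the citation is purely motivational --- so there is no circularity, but as a proof of the stated corollary the proposal is incomplete. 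One genuine soft spot inside part 4 itself: your integral representation requires $p=j+m-1\geq0$ and your integration by parts requires $m\geq1$, so the case $m=0$ (admitted by the statement, via the $j=0$ term) escapes your argument and would need separate treatment; in fairness, the paper's own derivation, which takes $d=m$ as a Beta-density parameter, suffers the same defect at $m=0$.

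For part 4 your argument is correct and takes a genuinely different route from the paper's. I checked the key steps: Gauss's theorem does identify the Gamma quotient as ${}_2F_1(a-m-j,b-m-j;c-m-j;1)$, so the bracket is the tail $\sum_{k\geq j+m}$; the shift $k=j+m+l$ with $(a-m-j)^{(j+m+l)}=(a-m-j)^{(j+m)}(a)^{(l)}$ does collapse each summand to $\frac{1}{(j+m)!}\,{}_3F_2(a,b,1;c,j+m+1;1)$; your Rodrigues-type identity is verified by substituting $u=1-x$, expanding $(1-u)^n$, and using $(n+m+j-1)!/(n+m-1)!=(n+m)^{(j)}$; and the accumulated constant after $n$-fold integration by parts is indeed $1$, yielding ${}_3F_2(a+n,b+n,n+1;c+n,2n+m+1;1)$ --- you are right that the extra lower entry $1$ in the displayed ${}_3F_2$ is a misprint, as is the paper's $(c-m-j)^{k}$ for the Pochhammer symbol $(c-m-j)^{(k)}$. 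The paper instead evaluates $\int_0^1{}_2F_1(a,b;c;x)K_n(x|1,m)f(x|1,m)\,dx$ twice --- once by (\ref{IFK1}), once by expanding $K_n$ and integrating the moments termwise, recognizing the hypergeometric tail only at the end. Your route (tail recognition first, then the Beta representation $\int_0^1(1-x)^{p}\,{}_2F_1\,dx=\frac{1}{p+1}\,{}_3F_2(a,b,1;c,p+2;1)$, the derivative identity, and integration by parts) is self-contained, makes the convergence hypotheses transparent (the boundary terms vanish because the zero of order $m+i$ dominates the singularity $(1-x)^{\operatorname{Re}(c-a-b)-i}$ when $m\geq1$), and in effect rediscovers the Rodrigues formula for $K_n(x|1,m)$, bypassing the orthogonal-expansion machinery entirely --- a cleaner and more rigorous treatment of part 4 than the paper's sketch, but only of part 4.
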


\begin{proof}
1) We use (\ref{2F1-2}) and (\ref{SQR}), and then get%
\begin{gather*}
\int_{0}^{1}F_{1}\left(  a,b;c;x\right)  K_{j}\left(  x|f,d\right)
f(x|f,d)dx=\frac{\left(  f\right)  ^{\left(  j\right)  }\left(  d\right)
^{\left(  j\right)  }}{j!\left(  f+d+2j-1\right)  \left(  f+d\right)
^{\left(  j-1\right)  }}\\
\times\frac{\left(  a\right)  ^{\left(  j\right)  }\left(  b\right)  ^{\left(
j\right)  }}{\left(  c\right)  ^{\left(  j\right)  }\left(  d+f+j-1\right)
^{\left(  j\right)  }}~_{3}F_{2}\left(  a+j,b+j,f+j;c+j,d+f+2j;1\right)  .
\end{gather*}
Now we observe that $\left(  f+d+2j-1\right)  \left(  f+d\right)  ^{\left(
j-1\right)  }\left(  d+f+j-1\right)  ^{\left(  j\right)  }\allowbreak
=\allowbreak\left(  f+d\right)  ^{\left(  2j\right)  }.$

2. Setting $c\allowbreak=\allowbreak f$ and using 1., we get%
\begin{align*}
&  \int_{0}^{1}F_{1}\left(  a,b;c;x\right)  K_{j}\left(  x|c,d\right)
f(x|c,d)dx\\
&  =\frac{\left(  d\right)  ^{\left(  j\right)  }\left(  a\right)  ^{\left(
j\right)  }\left(  b\right)  ^{\left(  j\right)  }}{j!\left(  c+d\right)
^{\left(  2j-1\right)  }}~_{2}F_{1}\left(  a+j,b+j;d+c+2j;1\right)  .
\end{align*}
To proceed further we use Gauss summation theorem and get
\begin{align*}
&  \int_{0}^{1}F_{1}\left(  a,b;c;x\right)  K_{j}\left(  x|c,d\right)
f(x|c,d)dx\\
&  =\frac{\left(  d\right)  ^{\left(  j\right)  }\left(  a\right)  ^{\left(
j\right)  }\left(  b\right)  ^{\left(  j\right)  }}{j!\left(  c+d\right)
^{\left(  2j\right)  }}\frac{\Gamma\left(  d+c+2j\right)  \Gamma\left(
d+c-a-b\right)  }{\Gamma(d+c-a+j)\Gamma(d+c-b+j)}.
\end{align*}
Now we apply (\ref{Gn}). In particular, we observe that $\Gamma\left(
d+c+2j\right)  \allowbreak=$\newline$\allowbreak\left(  d+c\right)  ^{\left(
2j\right)  }\Gamma\left(  d+c\right)  $ and we cancel out $\left(  d+c\right)
^{\left(  2j\right)  }$.

3. We use the fact that under our restrictions on parameters we have
(\ref{K01}) and
\[
_{2}F_{1}\left(  a,b;c;0\right)  =1,~_{2}F_{1}\left(  a,b;c;1\right)
=\frac{\Gamma(c)\Gamma(c-a-b)}{\Gamma(c-a)\Gamma(c-b)}.
\]
Now, we insert these values in (\ref{2F1-2*}) and proceed as follows%
\begin{gather*}
1=\frac{\Gamma(c+d-1)\Gamma\left(  c+d-a-b\right)  }{\Gamma(c+d-a)\Gamma
\left(  c+d-b\right)  }\\
\times\sum_{j\geq0}\frac{(-1)^{j}\left(  c\right)  ^{\left(  j\right)  }}%
{j!}\frac{\left(  a\right)  ^{\left(  j\right)  }\left(  b\right)  ^{\left(
j\right)  }\left(  c+d-1\right)  ^{\left(  j\right)  }(c+d+2j-1)}{\left(
c\right)  ^{\left(  j\right)  }\left(  d+c-a\right)  ^{\left(  j\right)
}\left(  d+c-b\right)  ^{\left(  j\right)  }}.
\end{gather*}
Now, it is enough to notice that the sum can be split into two sums. The first
one is equal to
\begin{align*}
&  \left(  c+d-1\right)  \sum_{j\geq0}\frac{(-1)^{j}}{j!}\frac{\left(
a\right)  ^{\left(  j\right)  }\left(  b\right)  ^{\left(  j\right)  }\left(
c+d-1\right)  ^{\left(  j\right)  }}{\left(  d+c-a\right)  ^{\left(  j\right)
}\left(  d+c-b\right)  ^{\left(  j\right)  }}\\
&  =(c+d-1)_{3}F_{2}\left(
\begin{array}
[c]{ccc}%
a & b & c+d-1\\
c+d-a & c+d-b &
\end{array}
;-1\right)  .
\end{align*}
The second is equal to%
\begin{align*}
&  2\sum_{j\geq1}\frac{(-1)^{j}}{\left(  j-1\right)  !}\frac{\left(  a\right)
^{\left(  j\right)  }\left(  b\right)  ^{\left(  j\right)  }\left(
c+d-1\right)  ^{\left(  j\right)  }}{\left(  d+c-a\right)  ^{\left(  j\right)
}\left(  d+c-b\right)  ^{\left(  j\right)  }}\\
&  =-2\frac{ab(c+d-1)}{(c+d-a)(c+d-b)}~_{3}F_{2}\left(
\begin{array}
[c]{ccc}%
a+1 & b+1 & c+d\\
c+d-a+1 & c+d-b+1 &
\end{array}
;-1\right)  .
\end{align*}
In the last line we used the fact that $\left(  x\right)  ^{\left(
n+1\right)  }\allowbreak=\allowbreak x\left(  x+1\right)  ^{\left(  n\right)
}$.

In the case of $x\allowbreak=\allowbreak1$ we proceed in the similar way.

4) We find $\int_{0}^{1}~_{2}F_{1}\left(  a,b;c;x\right)  K_{j}\left(
x|f,d\right)  f(x|f,d)dx$ in two ways. The first one follows formulae
(\ref{2F1-1}) and (\ref{SQR}) getting (\ref{IFK1}). On the other hand we
notice that we also have%
\[
\int_{0}^{1}K_{n}(x|f,d)f\left(  x|f,d\right)  dx=\frac{\left(  f\right)
^{\left(  k\right)  }\left(  d\right)  ^{\left(  n\right)  }}{n!(f+d)^{\left(
k\right)  }}\sum_{m=0}^{n}(-1)^{m}\binom{n}{m}\frac{\left(  f+d+n-1\right)
^{\left(  m\right)  }}{\left(  f+d+k\right)  ^{\left(  m\right)  }},
\]
and consequently%
\begin{gather*}
\int_{0}^{1}~_{2}F_{1}\left(  a,b;c;x\right)  K_{j}\left(  x|f,d\right)
f(x|f,d)dx=\\
\left(  d\right)  ^{\left(  n\right)  }\sum_{k\geq0}\frac{\left(  a\right)
^{\left(  k\right)  }\left(  b\right)  ^{\left(  k\right)  }\left(  f\right)
^{\left(  k\right)  }}{\left(  c\right)  ^{\left(  k\right)  }k!}\sum
_{m=0}^{n}(-1)^{m}\frac{\left(  f+d+n-1\right)  ^{\left(  m\right)  }%
}{m!(n-m)!\left(  f+d\right)  ^{\left(  k+m\right)  }}..
\end{gather*}
After changing the order of summation setting $f\allowbreak=\allowbreak1$ and
$d\allowbreak=\allowbreak m$ and further necessary simplifications we get%
\begin{gather*}
\int_{0}^{1}~_{2}F_{1}\left(  a,b;c;x\right)  K_{j}\left(  x|1,m\right)
f(x|1,m)dx=\\
m\sum_{j=0}^{n}\left(  -1\right)  ^{j}\frac{(m+n+j-1)!\left(  c-j-m\right)
^{\left(  j+m\right)  }}{j!(n-j)!\left(  a-j-m\right)  ^{\left(  j+m\right)
}\left(  b-j-m\right)  ^{\left(  j+m\right)  }}\\
\times\sum_{k\geq0}\frac{\left(  a-j-m\right)  ^{\left(  k+j+m\right)
}\left(  b-j-m\right)  ^{\left(  k+j+m\right)  }}{\left(  c-j-m\right)
^{\left(  k+j+m\right)  }\left(  m+j+k\right)  !}.
\end{gather*}
Now it is enough to notice that the second sum is equal to
\[
_{2}F_{1}\left(  a-j-m,b-j-m;c-j-m;1\right)  \allowbreak-\allowbreak\sum
_{k=0}^{j+m-1}\frac{\left(  a-j-m\right)  ^{\left(  k\right)  }\left(
b-j-m\right)  ^{\left(  k\right)  }}{k!\left(  c-j-m\right)  ^{\left(
k\right)  }}.
\]
Now it remains to apply Gauss summation formula.
\end{proof}

\begin{remark}
Recall the formula (\ref{FK2}), and let us set on the left-hand-side of
(\ref{2F1-2}) $a\allowbreak->\allowbreak-n,$ $b\allowbreak->$ \allowbreak
$a+b+n-1,$ $c\allowbreak->\allowbreak a,$ we have
\begin{gather*}
\left(  a\right)  ^{\left(  n\right)  }(-1)^{n}K_{n}(x|a,b)/n!=\sum_{j\geq
0}K_{j}\left(  x|f,d\right)  \frac{\left(  -n\right)  ^{\left(  j\right)
}\left(  a+b+n-1\right)  ^{\left(  j\right)  }}{\left(  a\right)  ^{\left(
j\right)  }\left(  d+f+j-1\right)  ^{\left(  j\right)  }}\\
\times~_{3}F_{2}\left(
\begin{array}
[c]{ccc}%
-n+j & a+b+n-1+j & f+j\\
a+j & d+f+2j &
\end{array}
;1\right)  .
\end{gather*}
Now, since we have%
\begin{align*}
&  _{3}F_{2}\left(
\begin{array}
[c]{ccc}%
-n+j & a+b+n-1+j & f+j\\
a+j & d+f+2j &
\end{array}
;1\right) \\
&  =\sum_{k=0}^{n-j}(-1)^{k}\binom{n-j}{k}\frac{\left(  a+b+n-1+j\right)
^{\left(  k\right)  }\left(  f+j\right)  ^{\left(  k\right)  }}{\left(
a+j\right)  ^{\left(  k\right)  }\left(  d+f+2j\right)  ^{\left(  k\right)  }%
},
\end{align*}
we can see that we get, after necessary simplification, the set of connection
coefficients between $K_{n}(x|a,b)$ and $K_{j}(x|f,d).$ These coefficients
were already obtained, analyzed and simplified recently in \cite{Szab25}, but,
what is more interesting, were obtained a long time ago by other methods by R.
Askey in \cite{Ask75}(7.23, 7.32, 7.33). The Jacobi polynomials we are using
have a slightly different definition, so it's important to remember that.
\end{remark}


\begin{thebibliography}{9}                                                                                                %


\bibitem {Andrews1999}Andrews, George E.; Askey, Richard; Roy, Ranjan. Special
functions. Encyclopedia of Mathematics and its Applications, 71.
\emph{Cambridge University Press,} Cambridge, 1999. xvi+664 pp. ISBN:
0-521-62321-9; 0-521-78988-5 MR1688958 (2000g:33001)

\bibitem {Ask75}Askey, Richard, Orthogonal polynomials and special functions.,
\emph{Society for Industrial and Applied Mathematics,} Philadelphia, PA, 1975.
vii+110 pp., MR0481145

\bibitem {Bat23}Bateman, Harry and Bateman Manuscript Project, Higher
Transcendental Functions [Volumes I-III], \emph{McGraw-Hill Book Company}, 2023

\bibitem {KLS}Koekoek, Roelof; Lesky, Peter A.; Swarttouw, Ren\'{e} F.
Hypergeometric orthogonal polynomials and their \$q\$-analogues. With a
foreword by Tom H. Koornwinder. \emph{Springer Monographs in Mathematics.
Springer-Verlag}, Berlin, 2010. xx+578 pp. ISBN: 978-3-642-05013-8 MR2656096 (2011e:33029)

\bibitem {IA}Ismail, Mourad E. H. Classical and quantum orthogonal polynomials
in one variable. With two chapters by Walter Van Assche. With a foreword by
Richard A. Askey. Encyclopedia of Mathematics and its Applications, 98.
\emph{Cambridge University Press,} Cambridge, 2005. xviii+706 pp. ISBN:
978-0-521-78201-2; 0-521-78201-5 MR2191786 (2007f:33001)

\bibitem {NIST}NIST Digital Library of Mathematical Functions, https://dlmf.nist.gov/

\bibitem {Szab25}Szab\l owski, Pawe\l \ J. , A few finite and infinite
identities involving Pochhammer and $q$-Pochhammer symbols obtained via
analytical methods. https://arxiv.org/abs/2411.00647.
\end{thebibliography}
\end{document}